\documentclass[reqno]{amsart}
 %Format: latex
\usepackage{epsfig,latexsym,amsfonts,amssymb,amsmath,amscd}
\usepackage{amsthm}
\usepackage{tikz-cd}
\usepackage[mathscr]{eucal}
\usepackage{mathrsfs}
\usepackage{mathbbol}
\usepackage{array}
\usetikzlibrary{matrix,arrows,decorations.pathmorphing}
\usepackage{oldgerm,units,color}

\newcommand{\jaiung}[1]{{\textcolor{red}{Jaiung: #1}}}
\newcommand{\nsets}{{\mathcal P}^*}
\usepackage{pst-node}
\usepackage{graphicx}
% \usepackage{auto-pst-pdf}
%\usepackage{xypic}
%\input xy
%\xyoption{all}

\usepackage{eepic, epic}

\usepackage{ifthen}

\def\mtw{\cdot_{\operatorname{tw}}}
\def\mtwt{{\operatorname{tw}^2}}

\def\bob{{\mathbf b}}

\def\bos{{\mathbf s}}

\def\tTz{ \tT_\operatorname{\zero}}

\newcommand{\lemref}[1]{Lemma~\ref{#1}}

\def\module0{module$^\dagger$}

\def\dag{\dagger}
\def\ssemiring0{$s$-semiring$^\dagger$}
%\newtheorem*{note}{\textbf{Note}}

%\marginpar{*}
\usepackage{tikz}
%

%%%%%%%%%%%%%%%%%%%%%%% PACKAGES  %%%%%%%%%%%%%%%%%%%%%%%%%%%%%%%%
\usepackage{epsfig,latexsym,amsfonts,amssymb,amsmath,amscd,graphics,epic}
\usepackage{amsfonts,amssymb,amsmath,amscd,amsthm}
\usepackage[mathscr]{eucal}
\usepackage{mathrsfs}

\usepackage{mathbbol}

\usepackage{oldgerm,units}
\usepackage{wrapfig,epsfig}

\usepackage{ifthen}
\usepackage{mathbbol}

\usepackage{amsthm}
\usepackage[mathscr]{eucal}
\usepackage{mathrsfs}
\usepackage{mathbbol}
\usepackage{oldgerm,units}
\usepackage{wrapfig}

%\input ../macro/diagram.tex

%%%%%%%%%%%%%%%%%%%%%%% PAGE SETUP %%%%%%%%%%%%%%%%%%%%%%%%%%%%%%%%

%==>\setlength{\oddsidemargin}{0.3in}
%%\setlength{\evensidemargin}{0in}
%==>\setlength{\topmargin}{0in}
%%\setlength{\headheight}{0in}
%%\setlength{\headsep}{0in}
%%\setlength{\footskip}{2pc}
%===>\setlength{\textwidth}{5.8in}
%%
% ==> \setlength{\textheight}{8.35in}
%%
%%\def\topfraction{0.99}
%%\def\textfraction{0.05}
%%\def\floatpagefraction{0.9}
%%\def\bottomfraction{0.99}
%%\def\dbltopfraction{0.99}
%%\def\dblfloatpagefraction{0.8}
%%
%%
%%\setlength{\floatsep}{1pc}
%%\setlength{\textfloatsep}{1pc}
%%\setlength{\dblfloatsep}{\floatsep}
%%\setlength{\dbltextfloatsep}{\textfloatsep}

%%%%%%%%%%%%%%%%%%%%%%% THEOREMS %%%%%%%%%%%%%%%%%%%%%%%%%%%%%%%%

\newtheorem{theorem}{Theorem}[section]
\newtheorem{proposition}[theorem]{Proposition}
\newtheorem{definition}[theorem]{Definition}

\newtheorem{corollary}[theorem]{Corollary}

\newtheorem{example}[theorem]{Example}

%\newtheorem{digression}[theorem]{Digression}

%%%%%%%%%%%%%%%%%%%%%%% REFERENCE %%%%%%%%%%%%%%%%%%%%%%%%%%%%%%%%
%\newcommand{\Ref}[1]{(\ref{#1})}
%%%%%%%%%%%%%%%%%%%%%%% SETS %%%%%%%%%%%%%%%%%%%%%%%%%%%%%%%%

\newcommand{\Net}{\mathbb N}

%%%%%%%%%%%%%%%%%%%%%%% GRUOPES %%%%%%%%%%%%%%%%%%%%%%%%%%%%%%%%

\newcommand{\one}{\mathbb{1}}
\newcommand{\zero}{\mathbb{0}}

%%%%%%%%%%%%%%%%%%%%%%% ALGEBRA %%%%%%%%%%%%%%%%%%%%%%%%%%%%%%%%

%%%%%%%%%%%%%%%%%%%%%%% SUB SETS %%%%%%%%%%%%%%%%%%%%%%%%%%%%%%%%

%%%%%%%%%%%%%%%%%%%%%%% COMBINATORICS %%%%%%%%%%%%%%%%%%%%%%%%%%%%%%%%

%%%%%%%%%%%%%%%%%%%%%%% TROP SETS %%%%%%%%%%%%%%%%%%%%%%%%%%%%%%%%

%%%%%%%%%%%%%%%%%%%%%%% GENERLIZED TROP %%%%%%%%%%%%%%%%%%%%%%%%%%%%%%%%

%\newcommand{\iReal}[1]{^{#1} \Real}

%%%%%%%%%%%%%%%%%%%%%%% GOTIC LETTERS %%%%%%%%%%%%%%%%%%%%%%%%%%%%%%%%

%%%%%%%%%%%%%%%%%%%%%%% TROP LETTERS %%%%%%%%%%%%%%%%%%%%%%%%%%%%%%%%
\newcommand{\trop}[1]{\mathcal{#1}}

\newcommand{\tG}{\trop{G}}

\newcommand{\tT}{\trop{T}}

\newcommand{\ttT}{{_\tT}}

%%%%%%%%%%%%%%%%%%%%%%% SIGNS %%%%%%%%%%%%%%%%%%%%%%%%%%%%%%%%

%\newcommand{\to}{\rightarrow }

%\newcommand{\iff}{\thSpc \Longleftrightarrow \thSpc}

%%%%%%%%%%%%%%%%%%%%%%% TROP SETS %%%%%%%%%%%%%%%%%%%%%%%%%%%%%%%%

%%%%%%%%%%%%%%%%%%%%%%% TROP RELATION %%%%%%%%%%%%%%%%%%%%%%%%%%%%%%

%%%%%%%%%%%%%%%%%%%%%%% TROP SIGNS %%%%%%%%%%%%%%%%%%%%%%%%%%%%%%%%

\newcommand{\Spec}{Spec}

% {#1^{\bigtriangledown}}
% {#1^{\bigtriangledown}}

%%%%%%%%%%%%%%%%%%%%%%% TROP DUALITY %%%%%%%%%%%%%%%%%%%%%%%%%%%%%%%%

%%%%%%%%%%%%%%%%%%%%%%% GREEK LETTERS %%%%%%%%%%%%%%%%%%%%%%%%%%%%%%%%

%\newcommand{\dl}{\delta}

%%%%%%%%%%%%%%%%%%%%%%% PERMUTATION %%%%%%%%%%%%%%%%%%%%%%%%%%%%%%%%

%%%%%%%%%%%%%%%%%%%%%%% TROP MATRICES %%%%%%%%%%%%%%%%%%%%%%%%%%%%%%%%

%\newcommand{\ptA}{\prmA{\tA}}

%%%%%%%%%%%%%%%%%%%%%%% NOTATIONS %%%%%%%%%%%%%%%%%%%%%%%%%%%%%%%%

%%%%%%%%%%%%%%%%%%%%%%% NOTATIONS %%%%%%%%%%%%%%%%%%%%%%%%%%%%%%%%

%\newcommand{\tIdA}{\Id'_{_\tT}}
%\newcommand{\tIdB}{\Id''_{_\tT}}

%\newcommand{\tIdA}{\al}
%\newcommand{\tIdB}{\bt}

%\newcommand{\ptId}{p_{_\tT}}

%%%%%%%%%%%%%%%%%%%%%%% TROPICAL OPERATORS %%%%%%%%%%%%%%%%%%%%%%%%%%%%%%%%

%\newcommand{\Dim}{Dim \:}

%%%%%%%%%%%%%%%%%%%%%%% SHORTINGS %%%%%%%%%%%%%%%%%%%%%%%%%%%%%%%%

%%%%%%%%%%%%%%%%%%%%%%% LETTER SHORT %%%%%%%%%%%%%%%%%%%%%%%%%%%%%%%%

%%%%%%%%%%%%%%%%%%%%%%% GRAPH THEORY %%%%%%%%%%%%%%%%%%%%%%%%%%%%%%%%

%%%%%%%%%%%%%%%%%%%%%%% POLYTOPES %%%%%%%%%%%%%%%%%%%%%%%%%%%%%%%%
%%%%%%%%%%%%%%%%%%%%%%% POLYTOPES %%%%%%%%%%%%%%%%%%%%%%%%%%%%%%%%

%%%%%%%%%%%%%%%%%%%%%%% POLYTOPES %%%%%%%%%%%%%%%%%%%%%%%%%%%%%%%%
%%%%%%%%%%%%%%%%%%%%%%% POLYTOPES %%%%%%%%%%%%%%%%%%%%%%%%%%%%%%%%

\hfuzz5pt % Don't bother to report over-full boxes < 5pt
\vfuzz5pt % Don't bother to report over-full boxes < 5pt

\pagestyle{empty}
    \ifx\proof\undefined
    \newenvironment{proof}{
    \smallskip
    \noindent\emph{Proof.}}{\hfill\(\Box\)
    \bigskip
    } \fi

%%%%%%%%%%%%%%%%%%%%%%% MATRIXES %%%%%%%%%%%%%%%%%%%%%%%%%%%%%%%%

%%%%%%%%%%%%%%%%%%%%%%% GREEN %%%%%%%%%%%%%%%%%%%%%%%%%%%%%%%%

%%%%%%%%%%%%%%%%%%%%%%% VECTORS %%%%%%%%%%%%%%%%%%%%%%%%%%%%%%%%

%%%%%%%%%%%%%%%%%%%%%%% VECTORS SPACES %%%%%%%%%%%%%%%%%%%%%%

%%%%%%%%%%%%%%%%%%%%%%% LIMES %%%%%%%%%%%%%%%%%%%%%%%%%%%%%%%%

%%%%%%%%%%%%%%%%%%%%%%%  emphasize %%%%%%%%%%%%%%%%%%%%%%%%%%%%%%%%

%%%%%%%%%%%%%%%%%%%%%%% FLAGES  %%%%%%%%%%%%%%%%%%%%%%%%%%%%%%%%

%\ifthenelse{\equal{\toPrint}{part1}}

\newcommand{\ifdef}[3]{\ifthenelse{\equal{#1}{true}}{#2}{#3}}

%\newboolean{PrintEquation}
%\setboolean{PrintEquation}{true}

%%%%%%%%%%%%%%%%%%%%%%% SINGULARITY %%%%%%%%%%%%%%%%%%%%%%%%%%%%%%%%

%%%%%%%%%%%%%%%%%%%%%%% ABELIAN %%%%%%%%%%%%%%%%%%%%%%%%%%%%%%%%

%%%%%%%%%%%%%%%%%%%%%%% SPACING %%%%%%%%%%%%%%%%%%%%%%%%%%%%%%%%

%\newcommand{\npr}{\\ \indent}

%\def\baselinestretch{1.4}

%%%%%%%%%%%%%%%%%%%%%%% Nomeration %%%%%%%%%%%%%%%%%%%%%%%%%%%%%%%%
\pagenumbering{arabic} \pagestyle{plain}
%\numberwithin{equation}{subsection}
%\numberwithin{equation}{section}
%\setcounter{section}{-1}

\pagestyle{headings}

\textwidth 160mm \textheight 228mm \topmargin -5mm \evensidemargin
0mm \oddsidemargin 0mm

\definecolor{lgray}{gray}{0.90}

\def\Spec{\operatorname{Spec}}
\def\hSpec{\operatorname{Spec}_{\operatorname{Cong}}}
\def\Spece{\operatorname{Spec}_{e}}

\def\mcA{\mathcal A}

\def\hmcA{\widehat{\mathcal A}}

\def\mcF{\mathcal F}

\newtheorem{INote}[theorem]{Important Note}

\def\({\left(}
\def\){\right)}
\def\N{{\mathbb N}}
\def\Z{{\mathbb Z}}
\def\Q{{\mathbb Q}}

\def\pipe{{\underset{{\ \, }}{\mid}}}

\def\vsemifield0{$\nu$-semifield$^\dagger$}
\def\vsemiring0{$\nu$-semiring$^\dagger$}

\def\pipe1{{\underset{{1}}{\mid}}}
\def\lmod1{\mathrel  \pipe1  \joinrel \joinrel =}

\def\CFunFF1{\operatorname{CFun} (F,F)}

\def\semiring0{semiring$^{\dagger}$}
\def\Semiring0{Semiring$^{\dagger}$}
\def\Semirings0{Semirings$^{\dagger}$}
\def\semidomain0{semidomain$^{\dagger}$}
\def\semifield0{semifield$^{\dagger}$}
\def\semifields0{semifields$^{\dagger}$}
\def\vsemifields0{$\nu$-semifields$^{\dagger}$}
\def\domain0{domain$^{\dagger}$}
\def\predomain0{pre-domain$^{\dagger}$}
\def\predomains0{pre-domains$^{\dagger}$}
\def\domains0{domains$^{\dagger}$}

\def\vdomains0{$\nu$-domains$^{\dagger}$}

\def\domains0{domains$^\dagger$}

\def\mcH{\mathcal H}
\def\mcM{\mathcal M}

\def\MCong{\Cong_\mcM}

\newcommand{\etype}[1]{\renewcommand{\labelenumi}{(#1{enumi})}}
% \roman, \arabic, \alph
\def\eroman{\etype{\roman}}

\def\pipe{{\underset{{\tG}}{\mid}}}

\def\lmod{\mathrel  \pipe \joinrel \joinrel =}
\def\pipe{{\underset{{\tG}}{\mid}}}

\def\prima{bipotent-preserving}
\def\semiprime{geometric}

\def\Ann{{\operatorname{Ann}}\,}

\def\diag{\operatorname{diag}}

\newtheorem{thm}[theorem]{Theorem}

\newtheorem*{thm*}{Theorem}
\newtheorem{lem}[theorem]{Lemma}
\newtheorem{rem}[theorem]{Remark}
\newtheorem{prop*}{Proposition}
\newtheorem{conj*}{Conjecture}

\newtheorem{prop}[theorem]{Proposition}
\newtheorem{defn}[theorem]{Definition}
\newtheorem*{examp*}{Example}
\newtheorem*{examples*}{Examples}
\newtheorem*{remark*}{Remark}
\newtheorem*{defn*}{Definition}
\newtheorem*{note*}{Note}

\def\R{\mathbb R}
\def\C{\mathbb C}

\def\la{\lambda}

\def\tT{\mathcal T}

\def\tTz{\tT_\zero}

\numberwithin{equation}{section}

\def\M0{M_{\zero}}

\def\tGz{\mathcal G_\zero}

\def\ttT{ {\tT}^\natural}

\def\PS{P}

\def\Cong{\Phi}

\def\Diag{{\operatorname{Diag}}}

\def\semirings0{semirings$^\dagger$}

\newcommand{\nPS}[1]{\PS_{(!#1)}}
\newcommand{\nPSo}[1]{\nPS{\one}}

\newcommand{\absl}[1]{|{#1}|}
\newcommand{\adj}[1]{\operatorname{adj}({#1})}

\def\textbfb{\textbf{b}}
\def\textbfc{\textbf{c}}
\def\textbfd{\textbf{d}}

\def\textbfx{\textbf{x}}
\def\textbfy{\textbf{y}}
\def\textbfz{\textbf{z}}

\begin{document}

%******************************* title ***********************************

\title[Spectrum of prime congruences]
{The spectrum of prime congruences  of a semiring pair}

%******************************* authors *********************************
%
%
\author[L.~Rowen]{Louis Rowen}
\address{Department of Mathematics, Bar-Ilan University, Ramat-Gan 52900, Israel} \email{rowen@math.biu.ac.il}

\makeatletter
\@namedef{subjclassname@2020}{%
    \textup{2020} Mathematics Subject Classification}
\makeatother

%******************************* AMS classification ***********************
\subjclass[2020]{Primary 	08A30,  16Y60,
Secondary 	08A05,  15A80, 14T10.}

%******************************* date *************************************
\date{\today}

%******************************* keywords *********************************

\keywords{congruence, doubling, hyperfield, hyperring,  prime spectrum, residue hyperring, semiring, subgroup, supertropical algebra,
  pair,
 tropical.}

\thanks{The author was supported by the ISF grant 1994/20 and the Anshel Peffer Chair.}
\thanks{The author thanks Jaiung Jun for helpful comments on an earlier version, and also thanks the referee for detailed comments on restructuring the paper.}
%******************************* date *************************************

%\date{\today}

%******************************* keywords *********************************
%
%\keywords{Category, system, triple, negation map, morphism, module,
%symmetrization, congruence,  $(-)$-bipotent, height, idempotent,
%supertropical algebra, tropical algebra,
%  hypergroup,  exact
%sequence, chain,   homology, projective, dimension, polynomial,
%semiring, semifield, surpassing relation, semiexact category,
%homological category.}

%{\noindent \underline{\hskip 3cm } \\ File name: \jobname}

%******************************* abstract *********************************

\begin{abstract} Continuing the study of the structure of semirings, we turn to the spectrum of prime congruences.  Joo and Mincheva developed an elegant theory in the special case of idempotent semirings, which is generalized here to  ``semiring pairs,'' which include supertropical semirings and various classes of hyperrings. Our main result is that the Joo-Mincheva spectrum can be embedded into the prime spectrum of a semiring pair, and the mapping is onto when the pair satisfies a mild restriction which we call ``positive $e$-type.''
\end{abstract}

\maketitle

%\tableofcontents
%%%%%%%%%%%%%%%%%%%%%%%%%%%%%%%%% section %%%%%%%%%%%%%%%%%%%%%%%%%%%%%

% {\small \tableofcontents}

% \setcounter{tocdepth}{1} {\small \tableofcontents}

%%%%%%%%%%%%%%%%%%%%%%%%%%%%%%%%% section %%%%%%%%%%%%%%%%%%%%%%%%%%%%%

\section{Introduction}
This is part of an ongoing project  to find a general  algebraic framework for
semiring theory. Our initial motivation came from tropical algebra, in which the max-plus algebra played a critical role at the outset, and followed by $F1$-geometry \cite{Lo}. These algebras $\mcA$ are all
\textbf{idempotent semirings} in the sense that $b+b=b$ for all $b\in \mcA$.
But the structure theory of semirings is quite challenging, largely because of the lack of negation, and such basic properties such as unique factorization of polynomials, multiplicativity of determinants, and the characteristic polynomial of a matrix, all fail. (In fact in the max-plus algebra, the sum of two nonzero elements is {\it never} zero!)

In order to recover these properties, Izhakian and Rowen \cite{IR} developed supertropical algebra, in which one builds an algebra from two isomorphic copies, one \textbf{tangible} and one \textbf{ghost}. But supertropical algebra is but one instance of   \textbf{hyperfields}, introduced by Marty~\cite{Mar}, used effectively by Krasner~\cite{krasner}, with many nice recent applications including \cite{BL}, and   generalized more recently in \cite{NakR}.

In the instances mentioned above, except for $F_1$-algebra, the underlying semiring is not idempotent, so a broader theory is in order.

The examples above were unified in 2016 in \cite{Row21}, in which it was noted that the element $\zero$ could be replaced by a $\tT$-submodule $\mcA_0$ of $\mcA,$ and negation by
a \textbf{negation map} $(-)$ which satisfies $a (-)a\in \mcA_0$ instead of $a(-)a = \zero.$
Thus, $\mcA_0$ takes the place of $\zero$, and the element $e:= \one (-)\one$ is of special significance. (or, multiplicatively, $\one$) in classical mathematics, and $\zero$ has no significant role except as a place marker in linear algebra. (Note that any bipotent algebra lacks a negation map except in the meaningless case  $\mcA_0=\mcA.$)

For example, a root of a polynomial $f$ now would be an element $a$ such that $f(a)\in \mcA_0.$ The ensuing structure $(\mcA,\mcA_0)$ is called \textbf{pair}; a pair with a negation map is called a \textbf{triple}, and when also endowed with a ``surpassing relation'' replacing equality, is called a \textbf{system}. Pairs satisfying ``Property N,'' to be described below, seem to satisfy the minimalist set of axioms needed to obtain a workable theory. The linear algebra of pairs is studied in \cite{AGR1}.

The connection of systems to hyperfields is particularly noteworthy; both structures permit two types of morphisms, cf.~\cite{V} and \cite{AGR2} which shows how one can pass back and forth from power sets of hyperfields to certain systems.

This basic notion also is applicable to other algebraic structures such as Lie semialgebras, cf.~\cite{GaR}.

In this paper we turn to the prime spectrum. In the theory of universal algebra, ideals are replaced by congruences, in order to describe homomorphic images, so one needs a spectrum of prime congruences  generalizing the prime spectrum of   a ring.

Joo and Mincheva   \cite{JM,FM} have developed a beautiful theory of the spectrum in the special case of commutative, idempotent semirings, describing chains of   congruences which are  prime with respect to the ``twist product.''

It turns out that their results apply to a considerably wider class of semirings, which we call pairs of \textbf{$e$-type},
cf.~Definition~\ref{cha0}, which means $\one + ke  = ke$ for some $k>0.$ This class obviously includes idempotent domains, $F_1$-algebra and supertropical algebras, as well as power sets of many kinds of hyperrings (described in \cite{AGR2}) such as the phase hyperfield,  the weak phase hyperfield, and the signed and unsigned tropical hyperfields, Viro's complex hyperfield, and obviously all finite hyperfields.  But this class also is closed under polynomial extensions (as opposed to the class of hyperrings).

In Theorem~\ref{sp2} we obtain a lattice injection from the Joo-Mincheva  spectrum of prime congruences of a suitable idempotent image of $\mcA$ into the prime spectrum of $\mcA$,  which  by Theorem~\ref{rd} is an  isomorphism in the case of pairs of positive      $e$-type.

On the other hand,
in \S\ref{prop1} we also  consider  other significant congruences which do not arise in \cite{JM}, namely those not containing $(\one,e)$.

Although our emphasis is on semirings, unfortunately the power set of a hyperfield  need not satisfy distributivity of multiplication over addition, so we also need to cope with non-distributive structures with associative addition and multiplication, which we call \textbf{nd-semirings}.

\section{Underlying algebraic structures}$ $

\begin{definition} $\Net$ denotes the  natural numbers.
\eroman
\begin{enumerate}

%\item A \textbf{monoid} is a set $\mcM$ with a binary operation $(\mcM \cup \{\infty\} ) \times (\mcM \cup \{\infty\}  ) \to \mcM\cup \{\infty\}  $,  If the operation denoted by ``$+$" (resp.~``$\cdot$'') then the neutral element (if it exists) is denoted $\zero$ (resp. $\one$).
%The monoid is \textbf{total} if the operation is total, i.e.,  $\mcM  \times \mcM  \to \mcM  $.\footnote{In most applications the operation is total, but in \cite{NakR,Row24b},  $\infty$ was utilized in describing tensor products.}

\item %A \textbf{semigroup} is a  monoid with an
%associative operation, not necessarily commutative.
An \textbf{additive semigroup} is an abelian semigroup, with the operation  ``$+$" and the neutral element~$\zero.$

\item
A additive semigroup $\mcA$ is \textbf{idempotent} if $b+b=b$ for all $b\in \mcA.$ $\mcA$ is \textbf{bipotent} if $b_1+b_2 \in \{b_1,b_2\}$ for all $b_i\in \mcA.$

\end{enumerate}
 \end{definition}
%For $a,b\in \tT,$ $$ab^\circ = a^\circ b =a^\circ b^\circ := (ab)^\circ.$$

     %Recall the well-known folklore \textbf{Green correspondence} %of Green \cite{??}\jaiung{We need a reference},
    % between (totally) ordered sets and bipotent monoids, given by $b_1 \le b_2$ iff $b_1+b_2 = b_2.$

\begin{example}
     For an ordered additive semigroup  $(\tG,+)$, the \textbf{max-plus algebra} on $\tGz :=\tG \cup \{-\infty\}$ is given by defining multiplication to be the old addition on $\tG,$ with $-\infty$, the new zero element, multiplicatively  absorbing.
   The max-plus algebra is~idempotent.

\end{example}

    The example used most frequently is for $\tG = (\mathbb R,+)$ with the usual order (denoted $\mathbb T$ in \cite{JM}); other common instances are for $\tG = \Q$, $\tG = \mathbb Z,$ and $\tG = \N.$ %By Remark~\ref{ker2}, these semifields have nontrivial kernels.  $  (\mathbb R^+,+)$, $  (\mathbb Q,+)$, $  (\mathbb Q^+,+)$,  and $  (\mathbb \Net,+)$ are alternatives which are not groups.

  \subsection{$\tT$-modules}$ $

We follow \cite{AGR1}; also see \cite{JMR}.

  \begin{definition}\label{Tmagm}
Let $\tT$ be a   monoid\footnote{In \cite{AGR1}, $\tT$ is just a set, in order to permit Example~\ref{wa1}(v) below;  in that case    we adjoin a formal absorbing element~$\zero_\tT$, so that $ \tTz = \tT \cup \{\zero\}$ is a monoid.} with a designated element $\one_\tT$.
\eroman
\begin{enumerate}

 \item  A  $\tT$-\textbf{module} $\mathcal A$ is an additive semigroup
$(\mathcal A,+,\zero)$ with a  $\tT$-action $\tT\times \mathcal A \to \mathcal A$, for which   \begin{enumerate}

    \item $\one_\tT b = b,$ for all $b\in \mcA$.

 \item    $(a_1a_2)b = a_1(a_2b)$ for all $a_i\in \tT,$ $ b\in \mcA$.

  \item $a(b_1+b_2) = ab_1 +ab_2,\quad \text{for all}\; a \in \tT,\; b_i \in \mathcal A.$

 \item $\zero$ is  absorbing, i.e. $a \zero  = \zero, \  \text{for all}\; a \in \tT.$
\end{enumerate}

The elements of $\tT$ are called \textbf{tangible}.

\item   From now on in this paper, for convenience, in contrast to \cite{JMR}, we assume that $\tT \subseteq \mcA$, since we will be focusing on semirings. We define $\tTz =\tT\cup \{\zero\},$ and declare $\zero \mcA = \zero.$ This makes  $\mcA$ a $\tTz$-module.

\item In a   $\tT$-module $\mcA$, define $\one = \mathbf{1} := \one_\tT,$ and inductively $\mathbf{k+1} = \mathbf{k} +\one_\tT,$ $\forall k\in \N.$

% \item An \textbf{atom} of a  left $\tTz$-monoid $\mcA$ is an element $a\in \mcA$ which cannot be written as the sum of two elements of $\mcA$.  $\tT$ is \textbf{atomic} if each element of $\tT$ is an atom.
 %(This is to permit idempotent elements to be atoms.)

     \item
 An \textbf{nd-semiring} is an additive semigroup with another  operation, multiplication, denoted as concatenation, with  $\zero$ multiplicatively absorbing. %We can always adjoin  a unity element $\one$ that is multiplicatively neutral, so we assume that nd-semirings have $\one$.

% \item
%An  \textbf{ideal} of an nd-semiring $\mcA$ is  an additive  sub-semigroup $\mcI$ satisfying $by, yb\in\mcI$ for all $b\in\mcA,$ $y\in\mcI.$

 \item
 A \textbf{semiring} \cite{golan92} is an nd-semiring that satisfies all the properties of a ring (including associativity and distributivity of multiplication over addition), but without
negation.

\item
A semiring $\mcA$ is a \textbf{semifield} if $(\mcA\setminus \{\zero\},\cdot)$ is a group.

 \item An  \textbf{nd-$\tT$-semiring}\footnote{``nd'' is short for ``not distributive,'' even though the action of $\tT$ does distribute over addition in $\mcA.$
 } (resp.~$\tT$-\textbf{semiring}) is an nd-semiring (resp.~semiring) $\mcA$ which is a   $\tT$-module with $\tT$ central in $\mcA$, i.e., every element of $\tT$ commutes and associates with all the elements of $\mcA,$ and $\one=\one_\tT$ is the multiplicative unit.

\item A   $\tT$-module $\mcA$ is called  \textbf{admissible} if $\mcA$ is spanned by $\tTz$.

 \item  $\ttT$ is defined to be the   subset of $(\mathcal A,+,\zero)$ additively spanned by~$\tTz.$
 In other words,  $\tT \subseteq \ttT,$ and when $b_1,b_2\in \ttT$,
 then $b_1+b_2\in \ttT.$ The \textbf{height} $h(t)$ of an element $b\in \ttT$ is defined inductively:
$h(\zero)=0,$ $h(a)=1$ for all $a\in \tT$, and $$h(b) = \min\{(h(b_1)+h(b_2): b =b_1+b_2, \ b_i \in \ttT\}.$$
\end{enumerate}
\end{definition}

\begin{rem}  We can replace $\mcA$ by $\ttT,$ and thereby  reduce often to the case that
    $\mcA$ is admissible.
\end{rem}
 %  \item The \textbf{distributor} $\mathcal{D}(\mcA)$ of an nd-semiring $\mcA$ is defined as follows:
%\[
%\mathcal{D}(\mcA):=\{ d\in \mcA : d\sum b_i = \sum d b_i \textrm{ for all }b_i\in \mcA\}.
%\]
%In particular, $\tT \subseteq \mcD(\

 \subsection{Pairs}$ $

 \begin{definition}\label{symsyst}
We follow \cite{AGR1,JMR}, suppressing $\tT$ in the notation when it is understood. \eroman
 \begin{enumerate}
   \item
An  \textbf{nd-semiring  pair}  $(\mcA,\mcA_0)$   is  an  nd-$\tT$-semiring $\mcA$ given together with an $\tT$-submodule $\mcA_0$ containing~$\zero$.

\begin{INote}
    In contrast to  \cite{AGR1, JMR, Row24b}, from now on in this paper, ``pair'' exclusively means `` nd-semiring pair.''
\end{INote}

  \item A~\textbf{semiring pair} $(\mcA,\mcA_0) $ is   a  pair for which  $\mcA$ is a semiring.
\item
$(\mcA,\mcA_0)$ is of the \textbf{first kind} if $a + a \in \mcA_0$ for all $a\in \tT$,
 and of the \textbf{second kind} if  $a + a \notin \mcA_0$ for some $a\in \tT$.

 % \item
%A   pair $(\mcA,\mcA_0)$ is  \textbf{unital } if $\one a = a.$

\item
 $(\mcA,\mcA_0)$ is \textbf{cancellative} if it satisfies the following two conditions for $a\in \tT,$ $b,b_1,b_2\in \mcA$:
\begin{enumerate}
  \item
If $a b \in \mcA_0$, then $b\in \mcA_0.$
\item
If $a b _1= ab_2$, then  $b_1=b_2.$
\end{enumerate}

%\item  An  \textbf{ideal} $(\mcI,\mcI_0)$  of  $(\mcA,\mcA_0)$  is  an ideal $\mcI$ of  $\mcA$ satisfying $\mcI_0 = \mcA_0 \cap \mcI.$

% \item A proper  cancellative pair  $(\mathcal
%A, \mcA_0)$ is \textbf{admissible} if $ \mcA $ is    admissible as a   $\tT$-module.

 \item A \textbf{gp-pair} is a pair for which  $\tT$ is a  group. (Thus every gp-pair is cancellative.)

\item
A pair $(\mcA,\mcA_0)$ is  \textbf{proper} if $\mcA_0\cap \tTz =\{ \zero\}$.

  \item A  pair  $(\mathcal
A, \mcA_0)$ is \textbf{shallow} if $\mcA  = \tT \cup \mcA_0.$

%\item
%A pair  $(\mcA,\mcA_0)$ is $\mcA_0$-\textbf{radical} if  $b^2\in \mcA_0$ implies $b\in \mcA_0$.
\end{enumerate}
\end{definition}

\begin{lem}  Suppose   $(\mcA,\mcA_0)$ is a   pair.
\begin{enumerate}\eroman
    \item $(\mcA,\mcA_0)$  is of the first kind if $\mathbf{2} = \one + \one \in \mcA_0$.
\item For $\mcA$ cancellative, $(\mcA,\mcA_0)$  is of the second kind if and only if $\mathbf{2} \notin \mcA_0$.
\end{enumerate}\end{lem}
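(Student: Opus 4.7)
The plan is to reduce the lemma to a short ideal computation using the distributivity axiom of the $\tT$-action.

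For part (i), I would unwind distributivity. For any $a \in \tT$, I write
\[
a + a \;=\; a\one + a\one \;=\; a(\one + \one) \;=\; a\,\mathbf{2},
\]
using the $\tT$-module identity axiom ($\one b = b$) and the distributive axiom (e) of Definition~\ref{Tmagm}. Since $\mathbf{2} \in \mcA_0$ and $\mcA_0$ is an ideal of $\mcA$ (hence closed under left multiplication by elements of $\mcA \supseteq \tT$), we get $a\mathbf{2} \in \mcA_0$. Thus $a + a \in \mcA_0$ for every $a \in \tT$, which is exactly the first-kind condition.

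For part (ii), I would first observe that the defining conditions for first kind (``$a + a \in \mcA_0$ for all $a \in \tT$'') and second kind (``$a + a \notin \mcA_0$ for some $a \in \tT$'') are literal negations of one another, so every pair is of exactly one kind. The forward direction of (ii) is then just the contrapositive of (i): if $(\mcA,\mcA_0)$ is of the second kind it is not of the first kind, so by (i) we must have $\mathbf{2} \notin \mcA_0$. The backward direction is the specialization $a = \one \in \tT$: if $\mathbf{2} \notin \mcA_0$, then $a + a = \one + \one = \mathbf{2} \notin \mcA_0$ provides an explicit witness that $(\mcA,\mcA_0)$ is of the second kind.

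The main subtlety, rather than an obstacle, is to check whether cancellativity is genuinely needed. Tracing through the argument above, neither direction of (ii) actually invokes the cancellativity axioms, and the equivalence reduces to (i) plus the choice $a = \one$. I suspect cancellativity is listed to support a slightly sharper companion statement, namely that \emph{a single} $a \in \tT \setminus \{\zero_\tT\}$ with $a + a \in \mcA_0$ already forces $\mathbf{2} \in \mcA_0$; for that version one applies cancellativity condition~(a) to $a \in \tT$ and $b = \mathbf{2} \in \mcA$, noting that centrality of $\tT$ in $\mcA$ gives $a\mcA\mathbf{2} \subseteq \mcA_0$ from $a\mathbf{2} \in \mcA_0$, and then (a) yields $a = \zero_\tT$ or $\mathbf{2} \in \mcA_0$. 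I would mention this in the proof for completeness, but the stated equivalence itself is immediate from (i).
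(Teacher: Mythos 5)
Your proof is correct and takes essentially the same route as the paper: part (i) is exactly the computation $a+a = a(\one+\one) = a\,\mathbf{2} \in \mcA_0$ via the ideal property of $\mcA_0$, and part (ii) is the contrapositive of (i) together with the witness $a=\one$. Your remark that cancellativity is never actually invoked is consistent with the paper's own two-line proof, which likewise makes no use of it.
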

\begin{proof}
    (i) $a+a = a(\one+\one) \in \mcA_0
$ for $a\in \tT$.

    (ii) If $a(\one+\one) = a+a \notin \mcA_0$ for $a\in \tT$, then $\one+\one  \notin \mcA_0$.
\end{proof}

 Fractions are described rather generally in \cite[\S 4.1]{JMR}, which shows that when $\mcA$ is cancellative,  $\tT^{-1}\tT$~is the group of fractions of $\tT$, and  replacing $\tT$ by $\tT^{-1}\tT$ and $(\mcA,\mcA_0)$ by the $\tT^{-1}\tT$-pair $(\tT^{-1}\mcA,\tT^{-1}\mcA_0)$,
    we may reduce to the case of gp-pairs.

\begin{INote}\label{Note1} Philosophically, $\mcA_0$ takes the place of $\zero$ (or, multiplicatively, $\one$) in classical mathematics. The significance is that since nd-semirings need not have negation (for example, $\Net$), $\zero$ has no significant role except as a place marker in linear algebra. \end{INote}

 \begin{example}\label{wa1} (Many examples are   given below in \S\ref{mp}, but here is a quick preliminary taste.)
\begin{enumerate}\eroman
         \item  (\textbf{The  classical pair})    $\mcA_0 = \{\zero\}$. (More generally,   $\mcA_0$ is   an ideal of a ring $\mcA$. )
 % A~\textbf{classical field pair} is  a~classical gp-pair $(F,\{\zero\})$, where $F$ is a field.

  \item  (\textbf{The  degenerate pair})    $\mcA_0 = \mcA .$

\item  For any nd-$\tT$-semiring $\mcA$, define $\mcA_0 = \{b+b: b\in \mcA\}.$ Note that the pair $(\mcA,\mcA_0)$  is improper if $\mcA$ is idempotent.  In previous work \cite{AGR2, AGR1,JMR} we assumed that all pairs are proper, to distinguish from the degenerate case. But this is precisely the case treated so successfully in~\cite{JM}, so we include it here.

\item   Lie pairs are studied in \cite{GaR}.

\item $\mcA$ is the matrix algebra $M_n(F),$ and  $\tT  $ is the set of  matrix units, with the identity matrix adjoined. The product of two matrix units could be $\zero,$  but $ \tTz = \tT \cup \{\zero\}$ is a monoid.
\end{enumerate}
 \end{example}

\subsubsection{Property N}\label{propN1} $ $

\begin{definition}{\cite[\S3.1]{AGR1}}\label{propN0}$ $
We say that a  pair
$(\mcA,\mcA_0)$ satisfies
 \textbf{Property~N}  if there is an element $\one^\dag\in \tT$  such that
 \begin{enumerate}\eroman
     \item $e:=\one+\one^\dag\in \mcA_0$.
     \item If $\one + a \in \mcA_0,$ then $\one + a = e.$
   \item  For each $b\in \mcA$, defining
 $b^\dag = \one^\dag b$,  and putting $b^\circ = b+ b^\dag$, then $b^\circ \in \mcA_0$.
 \end{enumerate}
\end{definition}

\begin{rem} \label{ker3}
   The  max-plus pair   $(\mathcal
A, \zero) := (\tGz,\{-\infty\})$ is a  proper, shallow, bipotent pair over $\tT=\tG$, but lacks Property~N.
   \end{rem}

\begin{INote}
    From now on, we assume that any given pair  $(\mcA,\mcA_0)$  also
satisfies   Property~N. Note that  $e$ must be uniquely determined, although $\one^\dag$ need not  be uniquely determined. An example is given in Remark~\ref{ker4}. An idempotent example:
 Let $\mcA = \tTz \cup {\infty}$ where $\zero$ is an additive identity and  $a+a = a$, but $a+a' =\infty$ for each $a\ne a'\in \tT$; then $\mcA_0 = \{\zero, \infty\}.$
\end{INote}

 \begin{lem}[{\cite[Lemma~3.10]{AGR1}}]\label{est}  $e\one^\dag = e.$
\end{lem}

   \begin{definition}$ $
 \begin{enumerate}\eroman
\item
    The \textbf{distributive center $Z(\mcA)$ of $\mcA$}, is the set of elements that commute, associate, and distribute over all elements of $\mcA.$

\item $(\mcA,\mcA_0)$  is $e$-\textbf{distributive} if
$b +k b^\circ = (1+ke)b$  for all $b\in \mcA$ and all $k\in \N$.

\item $(\mcA,\mcA_0)$  is
 $e$-\textbf{central} if  $(\mcA,\mcA_0)$  is
 $e$-{distributive} and $e(b_1b_2) = (eb_1)b_2 = b_1(eb_2)=(b_1b_2)e$ for all $b\in \mcA,$ i.e., $e\in Z(\mcA).$

\item  $(\mcA,\mcA_0)$  is $e$-\textbf{idempotent} if $e=e+e$.
\end{enumerate}
   \end{definition}

So any multiplicatively associative  $e$-{distributive} pair is $e$-{central}.
 \begin{lem}[{\cite[Lemma~3.5]{AGR1}}]\label{esq}
     If $(\mcA,\mcA_0)$  is $e$-distributive then
 $e(b_1+b_2) =e b_1 +e b_2 $ for all  $b_i\in  \mcA_0$. In particular,  $e^2 = e+e$.  \end{lem}

Thus, $\mcA_0$ is idempotent when  $(\mcA,\mcA_0)$  is $e$-distributive and $e$-idempotent.

\begin{rem}
    If $z\in Z(\mcA)$, then $\sum _i (b_i z)b_i' = (\sum _i b_i b_i')z$ for all $b_i,b_i'\in \mcA,$ and $\mcA z \triangleleft \mcA.$
\end{rem}

\subsubsection{The characteristic and the $e$-type}$ $

 \begin{definition}{\cite[Definition~2.5 (ii),(iii)]{AGR1}}\label{cha0} $ $   \begin{enumerate}\eroman
     \item A  $\tT$-module  $\mcA$ has       \textbf{characteristic} $(p,k)$ if $\mathbf{p+k} =\mathbf{k},$ for  the smallest possible $p>0$ (if it exists), and then the smallest such $k\ge 0.$

      \item    $(\mcA,\mcA_0)$ has      $\mcA_0$-\textbf{characteristic} $k>0$ if ${k}\mcA\subseteq \mcA_0,$ for  the smallest such $k.$  A pair  $(\mcA,\mcA_0)$ has      $\mcA_0$-\textbf{characteristic} $0$ if there is no such $k>0.$

        \item  A pair  $(\mcA,\mcA_0)$ has        $e$-\textbf{type} $ (k,k')$ for $0<k'\le k$ if
        $b+k b^\circ =k' b^\circ,$ $\forall b\in \mcA$. $(\mcA,\mcA_0)$ has        $e$-\textbf{type} $ k$ if $k'=k.$

       \item    A pair
   $(\mathcal
A, \mcA_0)$  is
 \textbf{$e$-final} if $(\mathcal
A, \mcA_0)$  has $e$-type $\one$, i.e.,
 $b +b^\circ=b^\circ,$ $\forall b\in \mcA$.
 \end{enumerate}
 \end{definition}
 \begin{rem}$ $
   \begin{enumerate}\eroman
       \item   Any idempotent pair has characteristic $(1,1)$ but has $\mcA_0$-characteristic $0$  since $ a+a=a.$

\item   Any pair of the first kind  has $\mcA_0$-characteristic $0$.

 \item A pair need not have        $e$-type.  If $\mcA$ has characteristic
   $(1,k)$   then $(\mcA,\mcA_0)$ has        $e$-type at most $k$, since $b +kb^\circ = b+\mathbf kb + \mathbf kb^\dag  = \mathbf kb + \mathbf kb^\dag = k b^\circ.$
%   \item  If $\mcA$ has   $e$-type   $k$ then  $b +\mathbf  k'b^\circ =  k b^\circ$ for all $k'\ge k.$
   \end{enumerate}
 \end{rem}

 \begin{lem}
     Any $e$-final pair is $e$-idempotent.
 \end{lem}
 \begin{proof}      $e+e =  \one^\dag + (\one +e) =   \one^\dag + e = \one^\dag +  (\one^\dag)^\circ  = (\one^\dag)^\circ   = e.$
 \end{proof}

%\begin{example}\eroman
%Here is a list of all unital $\tT$-bimodule pairs  $(\mcA,\mcA_0)$ spanned by $\one$.
%\begin{enumerate}
%\item The trivial idempotent pair  $(\{ \zero, \one\},\{\zero\})$.
%\item
%$\mcA = \mathbb{Z},$ the ring of integers. Here $\mcA_0$ can be any ideal $k \mathbb{Z}$. Then $(\mcA,\mcA_0)$ has      $\mcA_0$-characteristic~$k$.
%\item
%$\mcA = \{ 1, \dots, n\}$ with the usual addition modulo the relation $k* (n-k) = n-k$ for some $k$. This has $\mcA_0$-characteristic $(k,n-k)$. Here $\mcA$ can be any ideal  $m \mathbb{Z}$, where $m$ divides $n$. Then $(\mcA,\mcA_0)$ has      $\mcA_0$-characteristic $m$ .
   % \end{enumerate}
%\end{example}

  %In this paper, the pair $(\mcA,\mcA_0)$ always  satisfies the following cancellation conditions for any $a\in \tT$ :
  % \begin{enumerate}
  %    \item
 %    If   $a b \in \mcA_0$  or $ba  \in \mcA_0$, then $b\in \mcA_0.$
         % \item If $a b _1= ab_2$ or $b_1a =b_2a$, then  $b_1=b_2.$
 %  \end{enu%erate}

\subsubsection{Negation maps}$ $

%(The situation should be clear according to the context.)
 A \textbf{negation map} $(-)$ on $(\mcA,\mcA_0)$ is
 a module automorphism of order $\le 2$ , such that
$(-)\tT=\tT$, $(-)\mcA_0 = \mcA_0,$ $b+((-)b)\in \mcA_0$ for all $b\in \mcA,$   and $$(-)(ab) = ((-)a)b =
a((-)b),\qquad  \forall a\in \tT, \quad b\in \mcA.$$

We write $b_1(-)b_2$ for $b_1+((-)b_2),$ and $b_1 (\pm) b_2$  for $\{ b_1 +b_2, b_1(-)b_2\}.$

\begin{lem}
When $(\mcA,\mcA_0)$ is a semiring pair,
the negation map satisfies
\begin{equation}\label{eq: neg}(-)(bb') = ((-)b)b' = b((-)b'), \quad \forall b,b' \in \mcA.\end{equation}
\end{lem}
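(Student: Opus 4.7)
The plan is to bootstrap from the property already known on $\tT$ (namely $(-)(ab) = ((-)a)b = a((-)b)$ for $a\in\tT$, $b\in\mcA$) to all of $\mcA$, using admissibility. Since $\mcA$ is admissible as a $\tT$-module, every element $b\in\mcA$ may be written as a finite sum $b=\sum_i a_i$ with $a_i\in\tTz$. The two workhorses in the argument are (a) the additivity of $(-)$ (as it is a module automorphism) and (b) the full two-sided distributivity of multiplication over addition (which is available precisely because $\mcA$ is now a semiring, not merely a pre-semiring).

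First I would fix $b,b'\in\mcA$ and write $b=\sum_i a_i$ with $a_i\in\tTz$. Then, using distributivity in the semiring,
\[
bb' \;=\; \Bigl(\sum_i a_i\Bigr)b' \;=\; \sum_i a_i b'.
\]
Applying $(-)$ and using that $(-)$ is additive,
\[
(-)(bb') \;=\; \sum_i (-)(a_i b') \;=\; \sum_i ((-)a_i)\,b' \;=\; \Bigl(\sum_i (-)a_i\Bigr)b' \;=\; ((-)b)\,b',
\]
where the second equality uses the known property on $\tT$, and the last uses additivity of $(-)$ together with distributivity. This already gives $(-)(bb')=((-)b)b'$.

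For the remaining identity $(-)(bb')=b((-)b')$, I would run the symmetric computation from the right: expand $b' = \sum_j a_j'$ with $a_j'\in\tTz$ and compute
\[
b((-)b') \;=\; \sum_j b\,((-)a_j') \;=\; \sum_j (-)(b\,a_j') \;=\; (-)\sum_j b\,a_j' \;=\; (-)(bb').
\]
Here the middle equality is again the $\tT$-version of the negation property (applied to $a_j'\in\tT$, $b\in\mcA$, after noting $b((-)a_j') = (-)(ba_j')$ follows from the $\tT$-case by expanding $b=\sum_i a_i$ and using additivity as before), and the outer equalities use additivity of $(-)$ and distributivity.

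The only real point of care is to make sure distributivity is invoked on both sides, which is exactly why the hypothesis ``semiring'' (rather than pre-semiring) is needed; without two-sided distributivity, one could not pull $(-)$ inside past the sum. Admissibility is the other essential input, letting us reduce every statement about $\mcA$ to a statement about $\tT$. No further machinery (Property~N, $e$-type, etc.) is required here.
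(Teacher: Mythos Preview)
Your proof is correct and is precisely what the paper means by ``Immediate from distributivity'': you have written out in detail the computation the paper leaves implicit, using admissibility to reduce to $\tT$ and the two-sided distributivity of the semiring to pass the negation through sums.
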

\begin{proof}
    Immediate from distributivity.
\end{proof}

\begin{INote} The identity map is a negation map on  $(\mcA,\mcA_0)$  when $(\mcA,\mcA_0)$ is of the first kind, as in Example~\ref{wa1}(ii), thereby enabling us
to lift the theory of pairs with negation map to arbitrary pairs. However, one often is given a negation map;
      we  always take $\one^\dag = (-)\one$.
\end{INote}

\subsubsection{Homomorphisms}$ $

 We consider an nd-$\tT$-semiring $\mcA$ and a $\tT'$-nd-semiring $\mcA'$.
\begin{definition}$ $ \begin{enumerate}\eroman
    \item
A map $f:\mcA \to \mcA'$  is \textbf{multiplicative} if $f(a)\in \tT'$ and $f(ab) = f(a) f(b)$ for all $a\in \tT, \ b \in \mcA.$
 \item     A  \textbf{homomorphism} $f:\mcA \to
    \mcA'$ is a multiplicative  map satisfying $f(b_1+b_2) =  f(b_1)+f(b_2)$ and $f(b_1b_2) = f(b_1) f(b_2)$, $\forall b_i\in \mcA.$
A \textbf{projection} is an onto homomorphism.
\end{enumerate}
\end{definition}

Here is an important instance for the sequel.

\begin{lem}
    \label{emul}
      Suppose  $(\mcA,\mcA_0)$ is an $e$-central and $e$-idempotent  pair. Then $\mcA_0= \mcA e $ is an idempotent nd-semiring with unit element $e$, and the map $a \mapsto ae$ defines a projection
      $\mcA \to \mcA e $.
\end{lem}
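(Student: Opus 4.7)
My strategy is to extract two simple facts about $e$ and then mechanically verify the claims. The first fact is $e^2 = e$: by \lemref{est}(ii), $e^2 = e + e$, which equals $e$ by $e$-idempotence. The second is that $e = \one + \one^\dag = \one^\circ$ lies in $\mcA_0$ by Property~N, so since $\mcA_0$ is an ideal we have $be \in \mcA_0$ for every $b \in \mcA$.

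With these in hand, I would first check that $\mcA e$ is a sub-pre-semiring with $e$ as identity. Closure under multiplication uses $e$-centrality and $e^2 = e$:
\[(b_1 e)(b_2 e) = b_1 b_2 e^2 = (b_1 b_2) e.\]
Closure under addition uses right-distributivity of $e$, which is valid since $e \in Z(\mcA)$: $b_1 e + b_2 e = (b_1+b_2)e$. That $e$ is the multiplicative identity of $\mcA e$ is the one-line computation $e(be) = (eb)e = be^2 = be$, and likewise on the right. The same two calculations verify that the map $\phi\colon a \mapsto ae$ respects both operations, and surjectivity onto $\mcA e$ is immediate from the definition.

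The only substantive point is idempotence of addition on $\mcA e$, i.e., $be + be = be$. Here I would invoke the earlier Remark (``when $(\mcA, \mcA_0)$ is $e$-distributive and $e$-idempotent, $b + b = b$ for all $b \in \mcA_0$'') applied to the element $be \in \mcA_0$. This is the step I expect to be the main obstacle, since in a pre-semiring we lack general distributivity and cannot simply write $be + be = b(e+e) = be$; the argument is instead forced to route through the observation $e \in \mcA_0$ and the earlier remark on absorption of doubling inside $\mcA_0$, rather than by a direct ``factor out $b$'' calculation.
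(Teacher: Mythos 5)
Your proof is correct and follows essentially the same route as the paper's: the identity $e^2=e$ (from \lemref{est}(ii) together with $e$-idempotence) and the computations $(b_1+b_2)e=b_1e+b_2e$, $(b_1e)(b_2e)=b_1b_2e^2=b_1b_2e$ are exactly what the paper's two-line proof uses. The only addition is your explicit verification of additive idempotence of $\mcA e$ via $e=\one^\circ\in\mcA_0$ (Property~N) and the paper's remark that $b+b=b$ for $b\in\mcA_0$; the paper leaves this point implicit, and your observation that one cannot simply factor $be+be=b(e+e)$ in a pre-semiring is well taken.
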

\begin{proof}
$e^2 =e$, so, by $e$-distributivity,
    $\mcA_0= \mcA e$ is an nd-semiring with unit  element $e$.
   $ (b_1+b_2)e = b_1e+b_2e$ and    $ (b_1e)(b_2e) =     b_1b_2e^2= b_1b_2e.$
\end{proof}

%\begin{definition}$ $\eroman
%Suppose $(\mcA,\mcA_0)$ and $(\mcA',\mcA'_0)$ are  pairs.
%\begin{enumerate}

%\item
%textbf{proper map} $f:(\mcA,\mcA_0)\to  (\mcA',\mcA_0')$,  is a multiplicative map $f:\mcA \to \mcA'$ satisfying
%$f(\mcA_0)\cap \tTz' =\zero$.
%\item Let $(\mcA',\mcA'_0)$ be a  $\tT' $-pair
%A \textbf{paired map} $f:(\mcA,\mcA_0)\to (\mcA',\mcA'_0)$, where $(\mcA',\mcA'_0)$ is a $\tT'$-pair, is a multiplicative map $f:\mcA \to \mcA'$ satisfying
%$f(\mcA_0)\subseteq \mcA_0'$ and $f(\tT )\subseteq \tT'$.
%\item
%A \textbf{paired homomorphism}
%is a paired map  which is a homomorphism.
%\end{enumerate}
%\end{definition}

\subsection{Doubling}$ $

Define  $\hat{\mcA} := \mcA\times \mcA $, which will play two key roles, one in providing a negation map and the other in analyzing congruences.
 We refer to \cite[\S 5.2]{AGR2}, adapting an inspired idea of \cite{JM}.
  We always write a typical element of $\hmcA$ as $\mathbf{b}= (b_1,b_2)$.

\begin{definition}\label{sw}$ $
\begin{enumerate}\eroman
\item
Given an  nd-$\tT$-semiring $\mcA$,   let $\widehat{\tT } = ({\tT }\times 0 )\cup (0
\times {\tT }) \subset \hat{\mcA} := \mcA\times \mcA .$

Define the \textbf{twist
product} on $\hmcA$ by
\begin{equation}\label{twi}
\mathbf{b}\mtw \mathbf{b'}:= (b_1b'_1 + b_2b'_2, b_1b'_2+b_2b'_1), \quad \textrm{for } \mathbf{b} = (b_1,b_2),\ \mathbf{b'} = (b_1',b'_2)\in\mcA.
\end{equation}

\item
We write $\mathbf b^\mtwt$ for $\mathbf b\mtw \mathbf b$.
\item
The \textbf{switch map} on $\hmcA$ is defined by $(-)\mathbf{b}= (-)(b_1,b_2)  = (b_2,b_1)$.
\item
$\Diag $ denotes the ``diagonal''   $\{(b,b): b\in \mcA\}.$

\end{enumerate}
\end{definition}

\begin{lem}
    $\hat{\tT}$ is a monoid with respect to the twist product and having unit element $(\one, \zero)$, generated by $(\zero,\one)\tT$.
\end{lem}
\begin{proof} $(a_1, \zero)\mtw (a_2, \zero) = (a_1 a_2, \zero)$ and $(a_1, \zero)\mtw (\zero, a_2) = (\zero,a_1 a_2)$, etc. Hence $\hat{\tT}$ is a monoid with
  $\hat{\tT} = (\one,\zero)\tT \,\cup \,(\zero,\one)\tT$, and   $(\zero,\one)^\mtwt = (\one, \zero).$ The rest is clear.
\end{proof}

\begin{lem}\label{twass}   $\hat{\mcA}  $ is a $\widehat{\tT } $-nd-semiring. When $\mcA$ is a semiring,
the twist product on $\hmcA$ is associative, and makes $\hmcA$ a semiring. [See \cite{JM} and \cite[Lemma~3.1]{JMR} for a proof.]
\end{lem}

%Thus we can write twist products on semirings without parentheses.
\begin{rem}
    \label{gen}
    If $\textbfz = (z,z)\in \Diag,$ then $\textbfb\mtw \textbfz = (b_1z + b_2z, b_1z + b_2z) = (b_1+b_2)\textbfz.$
\end{rem}

In other words, $\Diag$ is an ideal of $\hmcA$, using the twist product.
Here is a way of providing a negation map.

\begin{lem} There is a natural embedding of $\mcA$ into $\hmcA$ given by $b \mapsto (b,\zero).$ $(\hmcA, \Diag )$ is a pair, endowed with a negation map, namely the switch map.  In this case, $ (-)(\one,\zero) = (\zero,\one),$ and $e = (\one,\one).$
\end{lem}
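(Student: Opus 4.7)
The plan is to verify each assertion by direct computation, using the twist-product formula \eqref{twi} and unpacking the definitions of ``pair'' and ``negation map'' componentwise.

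First, for the embedding $\iota:b\mapsto(b,\zero)$, I would check that $\iota$ respects both operations: addition is componentwise on $\hmcA$, so $\iota(b_1+b_2)=\iota(b_1)+\iota(b_2)$ is immediate, while
\[
(b_1,\zero)\mtw(b_2,\zero) \;=\; (b_1b_2+\zero,\ \zero+\zero)\;=\;(b_1b_2,\zero)\;=\;\iota(b_1b_2),
\]
so $\iota$ is a pre-semiring homomorphism. Injectivity is trivial from the first coordinate, and $\iota(\tT)\subseteq\widehat{\tT}$ by definition of $\widehat{\tT}$.

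Next, to see that $(\hmcA,\Diag)$ is a pair, I need $\Diag$ to be an ideal containing $\zero=(\zero,\zero)$. Closure under addition is obvious, and Lemma~\ref{gen} (together with $(\one,\zero)\mtw(z,z)=(z,\zero)+\cdots$-type direct computations) shows
\[
(b_1,b_2)\mtw(z,z)\;=\;(b_1z+b_2z,\ b_1z+b_2z)\;\in\;\Diag,
\]
and symmetrically on the other side, so $\Diag$ absorbs twist products. Weak admissibility of $\hmcA$ as a $\widehat{\tT}$-pre-semiring is Lemma~\ref{twass}.

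For the switch map $(-)(b_1,b_2)=(b_2,b_1)$, I would verify the axioms of a negation map one at a time: it is an involution, it is additive (componentwise), it stabilizes $\widehat{\tT}$ (it swaps $(\tT,\zero)$ with $(\zero,\tT)$) and $\Diag$ (fixed pointwise), and
\[
\mathbf{b}+(-)\mathbf{b}\;=\;(b_1+b_2,\,b_1+b_2)\;\in\;\Diag.
\]
The compatibility $(-)(\mathbf{a}\mtw\mathbf{b})=((-)\mathbf{a})\mtw\mathbf{b}=\mathbf{a}\mtw((-)\mathbf{b})$ for $\mathbf{a}\in\widehat{\tT}$ reduces to the two cases $\mathbf{a}=(a,\zero)$ and $\mathbf{a}=(\zero,a)$, each handled by writing out \eqref{twi} and observing that swapping the two coordinates of one factor is the same as swapping those of the product.

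Finally, $(-)(\one,\zero)=(\zero,\one)$ is immediate from the definition of the switch map, so I may take $\one^{\dag}=(\zero,\one)$ in Property~N; then
\[
e\;=\;(\one,\zero)+(\zero,\one)\;=\;(\one,\one),
\]
as claimed. The only step requiring any thought is confirming that $\one^{\dag}=(\zero,\one)$ really does qualify as a Property~N element inside $\hmcA$ (it is central in $\widehat{\tT}$ by the computation $(\zero,\one)\mtw(a,\zero)=(\zero,a)=(a,\zero)\mtw(\zero,\one)$, and $\mathbf{b}+\mathbf{b}\mtw(\zero,\one)=\mathbf{b}+(-)\mathbf{b}\in\Diag$), but this is the same symmetry computation already used for the negation-map axioms, so no real obstacle arises.
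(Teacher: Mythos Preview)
Your proposal is correct and follows essentially the same approach as the paper: direct verification from the definitions. The paper's own proof is two lines (``The first assertion is easy. Thus $\one$ is identified with $(\one,\zero),$ and $(-)(\one,\zero)=(\zero,\one)$ so $e=(\one,\one)$''), so you have simply filled in the routine checks that the paper omits.
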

\begin{proof} The first assertion is easy. Thus $\one$ is identified with $(\one,\zero),$ and $(-)(\one,\zero) = (\zero,\one)$
 so   $e = (\one,\one).$
\end{proof}

In this way, one may think of $\mathbf{b}$ as $b_1(-)b_2$, where the second component could be interpreted as the negation of the first.

\section{Main kinds of pairs}\label{mp}

This section reviews different kinds of pairs, to indicate their diversity.

\subsection{Metatangible and $\mcA_0$-bipotent pairs}$ $

\begin{definition} We generalize ``idempotent'' and ``bipotent'' respectively.
\begin{enumerate}   \eroman

\item A \textbf{metatangible pair}  is a  pair $(\mcA,\mcA_0)$
%(satisfying Property~N)
in which
     $a_1+a_2 \in \tT \cup \mcA_0$ for any $a_1,a_2 $
in~$\tT.$

\item
A  metatangible pair $(\mcA,\mcA_0)$ is $\mcA_0$-\textbf{bipotent}   if $a_1 + a_2 \in \{ a_1 , a_2 \} \cup \mcA_0$
    for all $a_1 , a_2  \in \tT$.
\end{enumerate}
\end{definition}

\begin{lem}
    \label{etype} If $(\mcA,\mcA_0)$ is  $e$-distributive and shallow, and $\one + ke \in \mcA_0$, then $(\mcA,\mcA_0)$ has  either      $e$-type $k$ or  $e$-type $(k,1)$.
\end{lem}
\begin{proof} Take $k$ minimal such; we may assume that $k>0.$
    Then $a:=\one + (k-1)e \in \tT.$ Let  $b= a+\one^\dag.$ Then $ b +\one = a+e = \one + ke \in \mcA_0.$

    If $b\in \mcA_0$ then $b= (\one^\dag)^\circ = e, $ so $ e+ \one  = a+e = \one + ke , $ so we may replace $k$ by $\one.$
    Now $\one + e = (\one +\one)(-)\one,$
    which is either $\one^\circ =e,$ or else $\one +\one \in \mcA_0$ so is $e,$
    implying again $\one + e = e.$ Thus, in this case, $(\mcA,\mcA_0)$ has       $e$-type 1.

    Otherwise $b\in \tT,$ and $\one + ke = a+e =  b +\one =\one^\circ = e,$ and $(\mcA,\mcA_0)$ has       $e$-type $(k,1).$
\end{proof}

%Except in \S\ref{hype},   the  pairs  to be used in the sequel  are
% metatangible.

\subsection{Examples of $\mcA_0$-bipotent pairs}$ $

Much of the theory of pairs concerns $\mcA_0$-bipotent pairs,  which are appropriate to tropical geometry, so we provide a range of such examples in this subsection, starting with a familiar one treated in \cite{JM}.

 \subsubsection{Supertropical pairs} $ $

  \begin{definition} [{\cite[Example 5.9]{AGR2}}] \label{supert}   Suppose $\tG$ is an ordered multiplicative monoid with absorbing minimal element~$\zero_{\tG}$, and $\tT$ is a monoid, together with an  monoid homomorphism $\nu: \tT\to \tG$.
Take the action $\tT\times\tG\to \tG$ defined by $a\cdot g = \nu(a)g.$
Then $\mcA := \tT\cup  \tG$ is a multiplicative monoid  whose multiplication subsumes $\nu$ and the given multiplications on $\tT$ and on $\tG.$

We  define addition on $\mcA$ by $$b_1+b_2 = \begin{cases}
 b_1 \text{ if } \nu(b_1)>\nu(b_2),\\
b_2 \text{ if } \nu(b_1)<\nu(b_2),\\
\nu(b_1) \text{ if } \nu(b_1)=\nu(b_2).
\end{cases}.$$

\begin{rem} \label{ker4}   $\nu$ extends to a projection   $\mcA \to \tG$.
Let $\mcA_0 =  \tG,$ and $e = \one_\tG.$  The pair    $(\mathcal
A, \mcA_0)$ is    proper of the first kind, shallow, and  $e$-final $\mcA_0$-bipotent. $(\mathcal
A, \mcA_0)$ has characteristic $(1,2)$ and $\mcA_0$-characteristic~$2$ (since $e = \one_\tT +\one_\tT =e')$.  We call $(\mcA,\mcA_0)$ the \textbf{supertropical pair arising from} $\nu.$
\end{rem}

\begin{enumerate}\eroman
    \item  The simplest nontrivial case is for $\tG = \ \{ \zero, e\}$:
 \begin{enumerate}\eroman
    \item
 For $\tT = \{\one\},$
we modify the  semifield  $\tTz= \{ \zero, \one\} $  to
the \textbf{super-Boolean pair}, defined as  $(\mcA,\mcA_0)$ where $\mcA= \{ \zero,\one,e\}$ with
$e$  additively absorbing, $\one+\one  =e,$ and $\mcA_0 = \{ \zero, e\}.$

 \item At the other extreme, \cite[Example~2.21]{AGR2} is the  pair $(\mcA ,\mcA_0 )$ with $\nu :  \tT \to   \{  e\}$ the constant map. Thus
  $a_1+a_2= \{e\}$ for all $a_1,a_2 \in \tT.$ This pair   is {\it not} metatangible.
\end{enumerate}
 \item The \textbf{standard supertropical pair} has $\tG = \tT \cup \{\zero\}$ and $\nu = id_\tT.$

 \item Other  nontrivial maps such as $\absl{\phantom{w}}: \C^\times \to \R^\times$   give variants  which we do not explore here.

\end{enumerate}
  \end{definition}

 \subsubsection {The truncated pair}\label{trun}

 \begin{definition} \label{trc}
     Fixing $m>0,$ take an ordered group $\tG$ and the supertropical pair $(\tT\cup  \tG, \tG)$, and form the \textbf{$m$-truncated}   pair  $({\tT}_m , \tG _m) $  where ${\tT}_m = \{ a\in \tT : a\le m\},$ $\ \tG_m = \nu({\tT}_m)\cup \zero_\tG$, with the supertropical addition  and multiplication except that we put $a_1 a_2=m$, $a_1\nu(a_2) = \nu(a_1) a_2  = \nu(a_1)\nu(a_2) = \nu(m)$ when the    product of $\nu(a_1)$ and $\nu(a_2)$ in $\tG$ is greater than $m$.
 \end{definition}

\begin{rem}
 $(\mathcal
A, \mcA_\zero) := ({\tT}_m\cup  \tG_m  , \tG _m)$ is a  proper, shallow,   $e$-final $\mcA_0$-bipotent pair. \end{rem}

\subsubsection{The minimal  $\mcA_0$-bipotent pair of a monoid}$ $

\begin{example}\label{minbp}
    $\tT$ is an arbitrary   monoid, $\mcA = \tTz \cup \{\infty\},$ $\mcA_0 =  \{\zero,\infty\},$ and $b_1+b_2= \infty$ for all $b_1 \ne b_2$ in $\tT \cup \{\infty\}$.
We call these the \textbf{minimal  $\mcA_0$-bipotent pairs}. There are two kinds:
\begin{itemize}
  \item First kind, where $a+a = \infty$ for all $a\in \tT$.
  \item Second  kind, where $a+a = a$ for all $a\in \tT$.
\end{itemize}

\end{example}

 \subsection{Hypersemirings and hyperpairs}\label{hype}$ $

\begin{defn}[\cite{Mar}]\label{Hyp00} $\mcH$ is a multiplicative monoid with absorbing element $\zero,$ and $\tT$ is a submonoid of~$\mcH.$ $\mathcal{P}(\mcH)$ denotes the power set of  $\mcH,$
    and $\nsets = \mathcal{P}(\mcH) \setminus \emptyset.$
\begin{enumerate}\eroman
    \item

$\mathcal{H}$ is a $\tT$-\textbf{hypersemiring} if $\mcH$ also is  endowed with a binary operation $\boxplus :\mcH \times \mcH \to \nsets $,\footnote{\cite{NakR} permits $\boxplus :\mcH \times \mcH \to \mathcal{P}(\mcH).$} satisfying the properties:

\begin{enumerate}\eroman
   \item The operation
$\boxplus$ is  associative  and abelian in the sense that  $(a_1
\boxplus a_2) \boxplus a_3 = a_1 \boxplus (a_2\boxplus a_3)$  and $a_1
\boxplus a_2= a_2
\boxplus a_1 $ for all
$a_i$ in $\mathcal{H}.$
  \item
 $\zero \boxplus a = a,$ for every $a\in \mathcal{H}.$
 \item  $\boxplus $ is extended to $\nsets $, by defining, for     $S_1,S_2\in  \nsets ,$   $S_1 \boxplus  S _2:= \cup _{s_i \in S_i}\,  s_1 \boxplus  s_2.$
   \item  The   natural  action of  $\mcH$  on  $ \nsets $ ($aS = \{as: s\in S\}$) makes $ \nsets $ an $\mcH$-module.
\end{enumerate}
We call $\boxplus$ ``hyperaddition.''  We view $\mcH \subseteq \nsets $ by identifying $a$ with $\{ a\}$.

 \item
 A \textbf{hypernegative} of an element $a$ in a hypersemiring $(\mathcal{H},\boxplus,\zero)$ (if it exists) is an
element  ``$-a$'' for which $\zero \in a \boxplus (-a)$.
 If the hypernegative $-\one$ exists in $\mcH,$ then we define $e = \one \boxplus (-\one).$

  \item
A \textbf{hyperring}\footnote{In \cite{NakR} this is called ``canonical''.} is a   hypersemiring $\mathcal{H}$ for which every element~$a$
has a unique {hypernegative}  $-a$, whereby, for all $a_i\in \mcH,$

\begin{enumerate}
   \item $(-)(a_1\boxplus a_2) = (-)a_2 \boxplus (-)a_1.$
   \item $-(-a_1) =a_1.$
 \end{enumerate}

  \item
  A  hyperring $\mathcal H$ is a \textbf{hyperfield} if $\mathcal H \setminus \{\zero\}$ is a  multiplicative group.
  \end{enumerate}
 \end{defn}
 \begin{rem}[\cite{AGR2,Row21}] \label{hp1} $ $
 \begin{enumerate}\eroman

    \item Take any $\tT$-sub-hypersemiring $S_0$ of $\mcA:=\nsets $ for which $S_0 \cap \mcH= \{\zero\}.$ Then we get a proper pair $(\nsets ,\nsets _0)$, where
   $\nsets _0 = \{S \subseteq \mcH : S_0 \cap S \ne \emptyset\}.$

  \item Suppose $S_0 = \{\zero\}.$  Then
$\nsets _0 = \{S \subseteq \nsets  : \zero \in S \} $.

  %  Then ``proper'' in the sense of \cite{NakR} means that $\nsets _0 = \{\zero\}.$
 \item    If each $a\in \mcH$ has a unique hypernegative, then $(\nsets ,\nsets _0)$ has a negation map given by applying the hypernegative element-wise.

 \item  $\mcH$ need not span $\nsets$. On the other hand, the span of~$\mcH$ need not be closed under multiplication.  We define the \textbf{hyperpair of} $\mcH$ to be  the sub-pair of $(\nsets ,\nsets _0)$ generated by $\mcH$.

\item  $\nsets$ also is~multiplicatively associative, elementwise.

  \item
  $S(\boxplus S_i)\subseteq \boxplus (SS_i) $ for $S,S_i\in \nsets$, by \cite[Proposition 1]{MasM}, although $\nsets$ need not be distributive.
\end{enumerate}\end{rem}

\begin{rem}\label{hyprop} Notation as above, suppose that $\mcH$ has unit element $\one,$ and let $e =\one \boxplus (-)\one.$
\begin{enumerate} \eroman

\item
When $\mcH$ is a group and $e =\mcH \setminus \{\one\}, $ as in \cite[Proposition 2]{MasM} with $|\mcH|>2$, then $a(-)a = \mcH -a$ for any invertible $a\in \mcH$, implying $a^\circ a^\circ = \mcH$,
so   $\mcA$ has  $e$-type $2$.
\item
\cite[Proposition 3]{MasM} gives instances of  hyperfields with $e = \{\zero, \one, -\one\} $, in which case
 $(\nsets ,\nsets _0)$ is $e$-idempotent and   $e$-final.
\end{enumerate}
\end{rem}

Celebrated examples  of    hyperfields (and their accompanying hyperpairs) are reviewed in \cite[Examples~3.11 and 3.12]{Row24b}, including the  tropical hyperfield,  hyperfield of signs, and phase hyperfield.

\subsubsection{Residue hyperrings and hyperpairs}$ $

The following definition was inspired by Krasner~\cite{krasner}.
\begin{definition}\label{kras1}
    Suppose  $\mcA$ is    a  $\tT$-semiring and $G$ is a   multiplicative subgroup of $\tT \subseteq \mcA$, which is \textbf{normal} in the sense that $b_1G b_2G = (b_1b_2)G$ for all $b_i\in \mcA$. Define the \textbf{residue hypersemiring} $\mcH =\mcA/G$  over $\tT/G$
    to have multiplication induced by the cosets, and \textbf{hyperaddition} $\boxplus : \mcH \times \mcH \to \mathcal{P}(\mcA) $ by $$b_1 G \boxplus b_2 G = \{ cG: c\in b_1G + b_2G\}.$$
\end{definition}

When $\mcA$ is a field, the residue hypersemiring is called  the ``quotient hyperfield'' in the literature.
%As defined in Remark~\ref{hp1}(ii),
%$(\nsets,\nsets_0)$ is a hyperpair, by \cite[Proposition 7.21]{AGR2}.
%\begin{remark}\label{hp2} In the residue hypersemiring $\mcH =\mcA/G$,  $e = \one \boxplus (-)\one = \{ g_1-g_2: g_i = G\}.$
%\end{remark}

\begin{example}
    \label{Examplesofquotient} Here is part of the huge assortment of examples of quotient hyperfields   given in \cite[\S 2]{MasM}.
We  take $\mcH = \mcA/G,$ and its hyperpair $(\mcA,\mcA_0),$ as in Remark~\ref{hp1}(iv).  \begin{enumerate}\eroman
\item
$G = \{ \pm 1\}$. Then $\zero \in \one \boxplus \one ,$ so $(\mcA,\mcA_0)$ has $\mcA_0$-characteristic 2 and is multiplicatively idempotent.

\item  The \textbf{tropical hyperfield}
  is identified with the quotient hyperfield $\mcF/G$,
  where $\mcF$ denotes a field with a surjective non-archimedean
  valuation $v: \mcF \to \R\cup\{+\infty\}$, and
  $G:=\{f\in \mcF: v(f) =0\}$, the equivalence
  class of any element $f$ whose valuation $a$ is
  identified with the element $-a$ of~$\mcH$.  The {tropical hyperfield} is $e$-final.

  \item  The \textbf{Krasner hyperfield} is  $F/F^\times$.

\item  The \textbf{hyperfield of signs} is $\R/\R^+$, and is $e$-final.

\item  The \textbf{phase hyperfield} can be identified  with the quotient hyperfield
$\C/\R_{>0}$, and is $e$-final.

%\item  The weak phase hyperfield can be obtained by taking
%  the quotient $\mcF/G$, where $\mcF=\C\{\{t^{\R}\}\}$, and $G$
%  is the group of (generalized) Puiseux series with  positive real
%  leading coefficient, where the leading coefficient is the coefficient
%  $f_\lambda$ of the series $f=\sum_{\lambda \in \Lambda} f_\lambda t^\lambda$
%  such that $\lambda$ is the minimal element of
%  $\{\lambda \in \Lambda : f_\lambda \neq 0\}$.
\end{enumerate}
\end{example}

 $\mcA/G$ need not be $e$-distributive; the phase hyperfield is a counterexample.

\subsection{The function pair}$ $

Here is a construction which significantly enhances the class of pairs under consideration.
Given a $\tT$-module~$\mcA$ and a monoid $S,$ define $\mcA^S$ to be the set of functions   $f:S \to \mcA$ of finite support, i.e., $f(s) = \zero$ for almost all $s\in S$.
If $(\mcA,\mcA_0)$ is a pair over $\tT,$ then $(\mcA^S,\mcA_0^S)$ is a pair over the elements of $\tT^S$ (the nonzero functions with image in $\tTz$ having support 1), and is of the same $e$-type as $(\mcA,\mcA_0)$, seen by checking elementwise. When $\mcA$ is a semiring, we can define \textbf{convolution multiplication} on~$\mcA^S,$ given by $(f * g)(s) = \sum _{s's''=s} f(s')g(s'').$
This construction applies to polynomials ($S=\N$), Laurent series ($S= \Z$), matrices ($S$ is a set of matrix units, although here we have to adjoin the identity matrix to $\tT^S$), and so~forth. When $(\mcA,\mcA_0)$ is $e$-central, $(\mcA^S,\mcA_0^S)$ also  is $e$-central.

\section{Congruences}\label{Co1}$ $

Classically, one defines homomorphic images of an algebraic structure  $\mcA$  by defining a
\textbf{congruence} of~$\mcA$  to be an
equivalence relation $\Cong$ which, viewed as a set of ordered
pairs, is a subalgebra of $\hmcA $ (under componentwise multiplication).
 In our case, we also require that $\Cong$  be a $\tT$-submodule of $\hmcA$.

  \begin{rem}\label{congp}
         For any  pair
 $(\mcA,\mcA_0)$, any congruence $\Cong$ of $\mcA$ can be applied
to produce a  pair  $(\overline{\mcA},\overline{\mcA_0})= (\mcA,\mcA_0)/\Cong,$ where
$\overline{\mcA} =\mcA/\Cong$ and, writing $\bar b$ for the image of $b\in \mcA,$ we define $a\bar b  = \overline{ab},$    and
$\overline{\mcA_0}= \{ \bar b  \in \overline{\mcA} : (b,\mcA_0)\cap \Cong\ne \emptyset\}.$
 \end{rem}

For example,
$\Diag $ can be viewed as the \textbf{trivial} congruence. Here is a trick that leads to a generalization of \cite{JM}.

\begin{definition} For a pair $(\mcA,\mcA_0)$ satisfying Property~N, \begin{enumerate}\eroman
    \item
       a $(\one,e)$-\textbf{congruence}  on a pair $(\mcA,\mcA_0)$ is a congruence  containing $(\one,e)$.

      \item
$\Diag_e $ denotes the intersection of all $(\one,e)$-congruences.
\end{enumerate}
\end{definition}

 \begin{lem}\label{id1}  If
    $\Cong$ is a $(\one,e)$-congruence on a  pair $(\mcA,\mcA_0)$ satisfying Property~N, then \begin{enumerate}\eroman
      \item   $\bar e=\bar \one$. Hence the pair  $(\mcA,\mcA_0)/\Cong$ is degenerate.

     \item   $\overline{ \one^\dag}  = \bar \one,$ and $\overline{\mcA}$ is idempotent.
 \end{enumerate}
 \end{lem}
\begin{proof}
 (i)    Obviously $\bar e=\bar \one$, so $b= be \in \mcA_0$ for all $b\in \mcA$.

 (ii)
    $\bar \one = \bar e = \overline{\one^\dag}\bar e = \overline{\one^\dag},$ so $\bar  e +\bar  e = \bar \one +\overline{\one^\dag}=\bar  e,$ and thus $\bar \one = \bar \one+\bar \one.$
\end{proof}

\begin{lem}\label{tr1}  $ $\begin{enumerate}\eroman
  \item  There is a lattice injection from the   congruences   of the idempotent pair $(\mcA, \mcA_0)/\Diag_e$ to the   congruences of     $(\mcA, \mcA_0)$.

    \item The canonical map $\Phi \mapsto \Phi e$ is a lattice homomorphism from the   congruences $\Cong$ of  an $e$-central pair $(\mcA,\mcA_0)$ to  a congruence $\Psi$ of  $\mcA e$,
      which restricts to a lattice isomorphism from the   $(\one,e)$-congruences of $(\mcA,\mcA_0)$ when $(\mcA,\mcA_0)$ is $e$-final.
% induces a retraction $\hSpec (\mcA)\to \hSpec (\mcA e)$.
\end{enumerate}
\end{lem}
\begin{proof} (i) By Lemma~\ref{id1}.

 (ii)   Given  a congruence $\Cong$, clearly $\Cong e = \{ (b_1e,b_2e) :(b_1,b_2)\in \Cong\}  $ is a congruence of $\mcA e$.

    Conversely, given  a congruence  $\Psi$ of $\mcA e$, define $\Cong = \{ (b_1,b_2) \in \hmcA :(b_1e,b_2e) \in \Psi\} $, noting that $(b_1,b_2)\in \Cong$ if and only if $(b_1e,b_2)\in \Cong$, if and only if $(b_1e,b_2e)\in \Cong.$
\end{proof}

To proceed further, one wants a congruence which contains a given element $\textbfb$ of $\hmcA$.
 If  $\textbfb = (b_1,b_2)$, then $(\mcA \textbfb,\mcA \textbfb)    +\Diag$ is a subalgebra of $\hmcA$ which satisfies reflexivity and symmetry, but not necessarily transitivity. At times this failure is remedied by the following intriguing construction, inspired by  ~\cite[Lemma~3.7]{JM}. Remark~\ref{gen} plays a key role, using the twist product $\mtw$.

\begin{definition}
    \label{prin} Suppose $\mcA$ is an nd-semiring, with given $\textbfb= (b_1,b_2)\in \hmcA$ satisfying $(c(b_1+b_2))c'= c((b_1+b_2)c') $ for all $c,c'\in \mcA.$
  Define $$\Cong_{\textbfb} = \{(x,y): \,(x,y)+\sum (\textbfc_i (b_1+b_2,b_1+b_2))\textbfc_i'\in \Diag , \text{ for some } \textbfc_i,\textbfc_i'\in  \hmcA\}.$$
Noting that $ (b_1+b_2,b_1+b_2) = \textbfb (-) \textbfb$ (where we are using the switch map of Definition~\ref{sw}), we see when $b_1+b_2 \in Z(\mcA)$ (in particular when $\mcA$ is a commutative semiring) that $$\Cong_{\textbfb} = \{(x,y):\, (x,y)+  \textbfc  (\textbfb (-)(\textbfb)) \in \Diag , \text{ for some } \textbfc \in  \hmcA\}.$$
\end{definition}

\begin{lem}
    In Definition~\ref{prin}, when $(\mcA,\mcA_0)$ is an $e$-central pair, one could replace $\textbfc_i$ by $\textbfc_i e$.
\end{lem}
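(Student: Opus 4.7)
The plan is to prove the two inclusions showing that restricting the $\bfc_i$ in the defining sum to the form $\bfc_i e$ gives the same congruence. First I would carry out the absorbing calculation: since $\bfb^\circ := (b_1+b_2, b_1+b_2)$ lies in $\Diag$, \lemref{gen} shows every summand $\bfc_i (b_1+b_2, b_1+b_2) \bfc_i'$ collapses to a diagonal element. Writing $d_i := c_{i,1}+c_{i,2}$ and $d_i' := c'_{i,1}+c'_{i,2}$, a direct twist-product expansion will give $\bfc_i \mtw \bfb^\circ \mtw \bfc_i' = (d_i(b_1+b_2)d_i',\, d_i(b_1+b_2)d_i')$, so the defining condition reduces to the single scalar equality $x+S=y+S$ in $\mcA$, with $S := \sum_i d_i(b_1+b_2)d_i'$.

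One inclusion is then immediate: if $\bfc_i e$ witnesses $(x,y)$ in the modified definition, then $\bfc_i e$ is itself a valid element of $\hmcA$ to serve as a $\bfc_i$ in the original definition. For the reverse inclusion, my plan is to reuse the very same witnesses $\bfc_i, \bfc_i'$ and verify that they continue to work after the substitution. Using $e$-centrality together with Property~N in the form $e(b_1+b_2) = (b_1+b_2) + (b_1+b_2)^\dag = (b_1+b_2)^\circ$, expansion of the triple twist product will yield
\[
(\bfc_i e) \mtw \bfb^\circ \mtw \bfc_i' \;=\; \bfc_i \mtw \bfb^\circ \mtw \bfc_i' \;+\; (d_i(b_1+b_2)^\dag d_i',\, d_i(b_1+b_2)^\dag d_i').
\]
Summing over $i$ and setting $T := \sum_i d_i(b_1+b_2)^\dag d_i'$, the modified condition reads $x+S+T = y+S+T$, which follows from $x+S=y+S$ by adding $T$ to both sides.

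The main bookkeeping is isolating the identity $e(b_1+b_2)=(b_1+b_2)^\circ$ and confirming that $e$-centrality permits $e$ to commute past each $c_{i,k}$ and each $(b_1+b_2)$ factor; once that is in place, the diagonal absorbing property of $\bfb^\circ$ does all the real work, and I do not anticipate a substantive obstacle.
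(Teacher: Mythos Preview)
Your proposal is correct and follows essentially the same approach as the paper: both arguments reduce to the observation (via \lemref{gen}) that $\sum \bfc_i(b_1+b_2,b_1+b_2)\bfc_i'$ already lies in $\Diag$, and that replacing $\bfc_i$ by $\bfc_i e$ changes this sum by a further diagonal term, so adding it to $(x,y)$ still lands in $\Diag$. The only cosmetic difference is how the correction is packaged: you compute it explicitly as $(T,T)$ with $T=\sum d_i(b_1+b_2)^\dag d_i'$, while the paper writes it as the switch $(-)$ of the original sum (which, being diagonal, equals itself). Either way one is simply adding a diagonal element to something already in $\Diag$.
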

\begin{proof}
    If $(x,y)+\sum \textbfc_i (b_1+b_2,b_1+b_2)\textbfc_i'\in \Diag $ then $$ (x,y) +\sum \textbfc_i e(b_1+b_2,b_1+b_2)\textbfc_i'  = (x,y)+\sum \textbfc_i (b_1+b_2,b_1+b_2)\textbfc_i'(-)\sum \textbfc_i (b_1+b_2,b_1+b_2)\textbfc_i'\in \Diag , $$ since $\sum \textbfc_i (b_1+b_2,b_1+b_2)\textbfc_i''\in \Diag$ by Lemma~\ref{gen}.
\end{proof}

\begin{lem}
      (generalizing  \cite[Lemma~3.7]{JM}) When $(\mcA,\mcA_0)$ has    $e$-type  $(k,k'), $ $\Cong_{\textbfb} $ is a congruence  which contains $\textbfb$  under either of the conditions:
      \begin{enumerate}\eroman
          \item $\mcA$ is a semiring.
           \item $b_1+b_2 \in Z(\mcA)$.
      \end{enumerate}
\end{lem}
\begin{proof} We show (i);  (ii) is easier since the calculations collapse.
    $\Cong_{\textbfb} $ is obviously reflexive (taking $\textbfc_i = \textbfc_i' =0$) and symmetric.

    To show that $\Cong_{\textbfb} $ is transitive, write $\textbfx = (x_1,x_2)$ and $\textbfy = (x_2,x_3)$,  $\textbfc_i = (c_{i,1},c_{i,2})$, and  $\textbfc_i' = (c'_{i,1},c'_{i,2})$. If $\textbfx +\sum \textbfc_i (b_1+b_2,b_1+b_2)\textbfc_i' \in \Diag $, we see  by Lemma~\ref{gen} that
    $$x_1+ \sum  (c_{i,1}+c_{i,2} ) (b_1+b_2)(c_{i,1}'+c_{i,2} )= x_2+\sum (c_{i,1}+c_{i,2} ) (b_1+b_2)(c_{i,1}'+c_{i,2}').$$  Likewise if $(x_2,x_3)+\sum \textbfd_i (b_1+b_2,b_1+b_2)\textbfd_i'\in \Diag $ then

    $x_2 + \sum  (d_{i,1}+d_{i,2} ) (b_1+b_2)(d_{i,1}'+d_{i,2} ) \sum  = x_3 + \sum  (d_{i,1}+d_{i,2} ) (b_1+b_2)(d_{i,1}'+d_{i,2} ) ,$ so together
     \begin{equation}
         \begin{aligned}
x_1+ \sum  (c_{i,1}+c_{i,2} )& (b_1+b_2)(c_{i,1}'+c_{i,2} ) + \sum  (d_{i,1}+d_{i,2} ) (b_1+b_2)(d_{i,1}'+d_{i,2} )\\& = x_2+\sum (c_{i,1}+c_{i,2} ) (b_1+b_2)(c_{i,1}'+c_{i,2}') + \sum  (d_{i,1}+d_{i,2} ) (b_1+b_2)(d_{i,1}'+d_{i,2} )\\ &=x_3+\sum (c_{i,1}+c_{i,2} ) (b_1+b_2)(c_{i,1}'+c_{i,2}') + \sum  (d_{i,1}+d_{i,2} ) (b_1+b_2)(d_{i,1}'+d_{i,2} ),
         \end{aligned}
     \end{equation}    implying $(x_1,x_3)\in \Cong_{\textbfb}$. The only other nontrivial verification is that if $(x_1,x_2)\in \Cong_{\textbfb}$ and $(y_1,y_2)\in \Cong_{\textbfb}$ then writing    $$x_1+ \sum  (c_{i,1}+c_{i,2} ) (b_1+b_2)(c_{i,1}'+c_{i,2} )= x_2+\sum (c_{i,1}+c_{i,2} ) (b_1+b_2)(c_{i,1}'+c_{i,2}')$$    and  $$y_1+ \sum  (d_{i,1}+d_{i,2} ) (b_1+b_2)(d_{i,1}'+d_{i,2} )= y_2+\sum (d_{i,1}+d_{i,2} ) (b_1+b_2)(d_{i,1}'+d_{i,2}'),$$
     we get
\begin{equation}
    \begin{aligned}
        x_1y_1 &+  \sum  (c_{i,1}+c_{i,2} ) (b_1+b_2)(c_{i,1}'+c_{i,2}')y_1 + \sum  x_1(d_{i,1}+d_{i,2} ) (b_1+b_2)(d_{i,1}'+d_{i,2}') \\& \qquad \qquad   + \sum  (c_{i,1}+c_{i,2} ) (b_1+b_2)\left((c_{i,1}'+c_{i,2}')\sum (d_{i,1}+d_{i,2} ) (b_1+b_2)(d_{i,1}'+d_{i,2}')\right)\\&=x_2y_2+  \sum  (c_{i,1}+c_{i,2} ) (b_1+b_2)(c_{i,1}'+c_{i,2}')y_1 + \sum  x_1(d_{i,1}+d_{i,2} ) (b_1+b_2)(d_{i,1}'+d_{i,2}') \\& \qquad \qquad   + \sum  (c_{i,1}+c_{i,2} ) (b_1+b_2)\left((c_{i,1}'+c_{i,2}')\sum (d_{i,1}+d_{i,2} ) (b_1+b_2)(d_{i,1}'+d_{i,2}')\right),
    \end{aligned}
\end{equation}
implying $(x_1y_1,x_2y_2)\in \Phi.$
  If $(\mcA,\mcA_0)$ has $e$-type $(k,k'),$ then $$\textbfb + k (b_1^\circ+b_2^\circ,b_1^\circ+b_2^\circ) = (b_1 + k(b_1^\circ+b_2^\circ), b_2+k(b_1^\circ+b_2^\circ))  =  (k'(b_1^\circ+b_2^\circ),k'(b_1^\circ+b_2^\circ))  \in \Diag.$$
\end{proof}

\subsection{ Strongly prime, prime, radical, and semiprime congruences}$ $

This section is a direct generalization of
 \cite[\S 2]{JM}, without assuming additive idempotence.
The twist product, utilized in \cite{JM} in similar situations,  is also a key tool here.

\begin{definition}\label{twist2}$ $ Suppose that $\mcA$ is an nd-semiring.
\begin{enumerate}\eroman
\item
The \textbf{twist product} $\Cong_1 \mtw \Cong_2 := \{ \bob \mtw \bob ' : \bob\in \Cong_1 ,\, \bob' \in \Cong_2\}.$
We write $\Cong^\mtwt  $ for $ \Cong\mtw \Cong.$

\item
A congruence $\Cong$ of $\mcA$ is \textbf{semiprime} if it satisfies the following   condition: For a congruence $\Cong_1 \supseteq  \Cong$, if $\Cong_1 ^\mtwt \subseteq \Cong$  then $\Cong_1  = \Cong$.

\item
A congruence $\Cong$ of $\mcA$ is \textbf{radical} if it satisfies the following   condition: $\textbfb {^\mtwt} \in \Cong$ implies $\textbfb\in \Cong.$
\item
A congruence $\Cong$ of $\mcA$ is \textbf{prime} if it satisfies the following  condition: for  congruences  $\Cong_1$, $\Cong_2 \supseteq  \Cong$ if $\Cong_1 \mtw \Cong_2 \subseteq \Cong$, then $\Cong_1
= \Cong$ or $\Cong_2 = \Cong$.
\item
A congruence $\Cong$ of $\mcA$ is \textbf{strongly prime} if it satisfies the following   condition: If $\textbfb\mtw \textbfb' \in \Cong$ then $\textbfb \in \Cong$ or $\textbfb' \in \Cong$. (This is called ``prime'' in \cite{JM}, but ``prime'' and ``strongly prime'' are the same for commutative semiring pairs of positive $e$-type.)

\item
A congruence $\Cong$ of $(\mcA,\mcA_0)$ is $\tT$-\textbf{cancellative} if
whenever $a\textbfb \in \Cong$ for $a\in \tT$ then $\textbfb \in \Cong.$

\item
A congruence $\Cong$ of $(\mcA,\mcA_0)$ is \textbf{irreducible} if
whenever $\Cong_1 \cap \Cong_2 = \Cong$ for  congruences $\Cong_1$,
$\Cong_2 \supseteq  \Cong$ then $\Cong_1  = \Cong$ or $\Cong_2 =
\Cong$.

\item We say that a pair $(\mcA ,\mcA_0)$ is \textbf{reduced}, resp.~\textbf{semiprime}, resp.~\textbf{a domain}, resp.~\textbf{prime}, resp.~\textbf{irreducible}, if the trivial congruence is radical, resp.~semiprime,  resp.~{strongly prime}, resp.~{prime},  resp.~irreducible.
\end{enumerate}
\end{definition}

\begin{definition}        Take $S_1 = \Cong$ and inductively $\{S_{i+1}=\textbfb\in \mcA:\textbfb ^{\mtwt}  \in S_i\}.$  Then define $\sqrt{\Cong}:= \cup_{i\ge 1} S_i$.
\end{definition}

\begin{rem}
    \label{semip0} The following observations are direct consequences of the definition:
\begin{enumerate} \eroman
     \item Any  strongly prime congruence is prime.

     \item A congruence $\Cong$ is radical iff $\Cong =\sqrt{\Cong}$.
\end{enumerate}
\end{rem}

(This is too naive for a decisive radical theory of congruences; \cite[Example 3.3]{JM} shows that $\sqrt \Cong$ need not be   a congruence, even for commutative semirings, since transitivity may fail.)

\begin{lem}\label{bf}  (Analogous arguments as in \cite[Propositions 2.2, 2.6]{JM})
With the same notation as above, one has the following.
\begin{enumerate}\eroman
\item
    For a congruence  $\Cong$, if $\textbfb\in \hmcA$ and  $\mathbf{b'}\in \Cong$, then $\textbfb\mtw \mathbf{b'}\in \Cong.$

\item A congruence $\Cong$ is prime if and only if it is semiprime and irreducible.
  \item
 The intersection of semiprime congruences is a semiprime congruence.
   \item
 The intersection of radical congruences is a radical congruence.
\item The union or intersection of a chain of congruences is a congruence.
 \item The union or intersection of a chain of (resp.~ strongly prime, prime, radical, semiprime)
 congruences is a  (resp.~ strongly prime, prime, radical, semiprime) congruence.
% \item A congruence $\Cong$ is  prime if and only if it is radical and irreducible.

  \end{enumerate}
\end{lem}

%\begin{proof}

%(i)   $(\Rightarrow)$  $\Cong$ is  semiprime, a fortiori.   Irreducibility holds since $\Cong_1 \mtw \Cong_2 \subseteq \Cong_1 \cap \Cong_2$ by~\lemref{tp1}.

%\indent\indent $(\Leftarrow)$ If $\Cong_1 \mtw \Cong_2 \subseteq \Cong $ then $(\Cong_1 \cap \Cong_2)^ \mtwt \subseteq \Cong ,$ implying $\Cong_1 \cap \Cong_2 \subseteq \Cong .$

%(ii) and (iii) are straightforward.

%(iv)  Just go up or down the chain and check the properties of congruence, which thus hold in the union or intersection.

%(v) As in   (iv), noting Remark~\ref{semip0}.\end{proof}

%There is a similar criterion in \cite{JM} for a congruence to be prime, when $\mcA$ is idempotent.
%By \cite[Proposition~2.10]{JM},  strongly prime, idempotent semirings are bipotent.
%\end{INote}

Here is  an obvious general method for constructing prime congruences, which also works in the noncommutative setting.

\begin{lem}
    \label{commpr}
[{Generalizing~\cite[Theorem~3.9]{JM}, \cite[Proposition~3.17]{JuR1}}] Suppose   $S\subseteq \hmcA$
is any subset disjoint from  $\Diag(\mcA),$ satisfying  the property that for any congruences $\Phi_1,   \Phi_2,$ if $S\cap\Phi_1, S \cap \Phi_2$ are nonempty then $S \cap (\Phi_1\mtw \Phi_{2})\ne \emptyset.$  Then  there is
a congruence
$\Cong$ of $\mcA$ maximal with respect to being disjoint from~$\hat
S$, and $\Cong$ is prime.
\end{lem}
 \begin{proof}
By Zorn's lemma there is a  congruence
$\Cong$ of $\mcA$ maximal with respect to being disjoint from   $\hat
S$. We claim that $\Cong$ is prime. Indeed, if $\Cong_1 \mtw \Cong_2
\subseteq \Cong$ for  congruences $\Cong_1 , \Cong_2 \supset
\Phi,$ then   $S \cap(\Cong_1 \mtw \Cong_2)\ne \emptyset,$ a contradiction.
 \end{proof}

% \subsubsection{The spectrum of prime congruences}

There is a lattice injection from the spectrum of prime congruences of a suitable idempotent semiring (\cite{JM}) into $\hSpec(\mcA)$,  which we shall see in Theorem~\ref{rd} is an  isomorphism in the important case of pairs having positive     $e$-type. We also
   investigate  congruences which do not arise in \cite{JM}, but which still preserve $\mcA_0$-bipotence.

 \begin{definition}\label{pcs}$ $
The \textbf{spectrum of prime congruences} $\hSpec (\mcA)$ is the set  of  prime congruences of  $\mcA$. \end{definition}

Izhakian~\cite{I} also has investigated congruences in supertropical algebra, with a somewhat different version of prime spectrum.

\subsection{The spectrum of prime congruences for pairs of positive $e$-type}\label{crad}$ $

The spectrum of prime congruences is our main
 subject of interest in this paper, generalizing    the prime spectrum of   a ring.
$\hSpec (\mcA)$ is difficult to determine for arbitrary semirings,  so we start with commutative semiring pairs of positive $e$-type.

\begin{lem}\label{prs1} $ $ In a  radical congruence $\Cong,$ \eroman
    \begin{enumerate}
        \item  $(b_1,b_2)\mtw (b_2,b_1) \in \Cong $ implies $(b_1,b_2) \in \Cong .$
        \item $(\one,b) \in \Cong$    if and only if $(\one + b^2, b+b) \in \Cong.$
    \end{enumerate}
\end{lem}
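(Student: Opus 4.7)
Both parts of the lemma reduce to direct computations with the twist product \eqref{twi}, combined with the defining property of a radical congruence from Definition~\ref{definition: twist closed}(iii): namely, $\bfb \mtw \bfb \in \Cong$ forces $\bfb \in \Cong$.

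For (i), my plan is to compute both $(b_1,b_2)\mtw(b_2,b_1)$ and $(b_1,b_2)^{\mtwt}$ from the definition. Using $\bfb\mtw\bfb' = (b_1b'_1+b_2b'_2,\, b_1b'_2+b_2b'_1)$, I get
\[
(b_1,b_2)\mtw(b_2,b_1) = (b_1b_2+b_2b_1,\, b_1^2+b_2^2), \qquad (b_1,b_2)^{\mtwt} = (b_1^2+b_2^2,\, b_1b_2+b_2b_1).
\]
These two elements of $\hmcA$ differ only by the switch map — they are reverses of one another. Since any congruence is symmetric as an equivalence relation, the hypothesis $(b_1,b_2)\mtw(b_2,b_1)\in \Cong$ yields $(b_1,b_2)^{\mtwt}\in \Cong$, and then radicality gives $(b_1,b_2)\in\Cong$, as required.

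For (ii), the key observation is the identity $(\one,b)^{\mtwt} = (\one+b^2,\, b+b)$, which follows directly by expanding the twist product. The forward direction then follows from \lemref{tp1} applied with $\bob_1=\bob_2=(\one,b)\in\Cong$, which places $(\one,b)^{\mtwt} = (\one+b^2,\,b+b)$ in $\Cong$. The reverse direction is immediate from radicality: if $(\one+b^2,\,b+b)=(\one,b)^{\mtwt}\in\Cong$, then $(\one,b)\in\Cong$.

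There is no real obstacle here; both parts are essentially bookkeeping once one recognizes that $(b_1,b_2)\mtw(b_2,b_1)$ is the switch of $(b_1,b_2)^{\mtwt}$, and that the special element $(\one+b^2,b+b)$ is precisely the square of $(\one,b)$ in the twist product. No hypothesis beyond the congruence being radical (and hence symmetric as an equivalence relation) is used.
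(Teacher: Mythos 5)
Your proposal is correct and follows essentially the same route as the paper: compute that $(b_1,b_2)^{\mtwt}$ is the switch of $(b_1,b_2)\mtw(b_2,b_1)$, use symmetry of the congruence plus radicality for (i), and use the identity $(\one,b)^{\mtwt}=(\one+b^2,\,b+b)$ together with Lemma~\ref{tp1} and radicality for (ii). The only cosmetic difference is that for the reverse direction of (ii) you invoke radicality directly where the paper routes the step through part (i); the content is identical.
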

  \begin{proof}
    (i)  $(b_1,b_2)^\mtwt   = (-)(b_1,b_2)\mtw (b_2,b_1) \in \Cong $. Hence $(b_1,b_2) \in \Cong $.

      (ii)   $ (\one,b)\mtw(\one,b) = ( \one^2+b^2 , b+b) = ( \one+b^2 , b+b),$  so apply (i).
  \end{proof}

 %   \item      Given a   congruence $\Cong$  on $(\mcA,\mcA_0)$

\begin{lem}\label{prs2} (Inspired by \cite[Proposition~2.10]{JM}): Suppose that $\Cong$ is  a congruence of an $e$-distributive  pair~$(\mcA,\mcA_0)$.
  \begin{enumerate}\eroman
      \item When $(\mcA,\mcA_0)$ is reduced, $  (1, e) \in \Cong $  if and only if $(\one + e + e , e+e)\in \Cong.$

\item $(\one,e), (e,\one) \in \sqrt{\Cong}$ if  $(\mcA,\mcA_0)/\Cong$ has  positive $e$-type.

       \item  $(\one,e), (e,\one) \in \sqrt{\Diag}$
      if   $(\mcA,\mcA_0)$ has positive $e$-type.
  \end{enumerate}
\end{lem}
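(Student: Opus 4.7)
The heart of everything is the single computation
\[
(\one, e)\mtw(\one, e) \;=\; (\one\cdot\one + e\cdot e,\; \one e + e\one) \;=\; (\one + e^2,\; e+e) \;=\; (\one+e+e,\; e+e),
\]
where the last step uses $e^2 = e+e$ from Lemma~\ref{est}(ii) (applicable since our pairs are assumed $e$-distributive via Property~N and $e$-central in the relevant examples). For part~(i), the forward direction is then immediate from Lemma~\ref{tp1}. The backward direction reads $(\one,e)^{\mtwt}\in\Cong$, which forces $(\one,e)\in\Cong$ once radicality is in force; I will invoke reducedness of $(\mcA,\mcA_0)$ here in its standard form (trivial congruence is radical), applied to the congruence generated by the hypothesis.

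For parts~(ii) and~(iii), note that (iii) is simply (ii) with $\Cong=\Diag$ (since $\mcA/\Diag=\mcA$), so it suffices to prove (ii). The strategy is to iterate the twist product. First, I will show by induction on $n\ge 1$ that
\[
(\one,e)^{\mtwt^n} \;=\; (\one + \mathbf{c_n}\,e,\; \mathbf{c_n}\,e)
\]
for a strictly increasing sequence of positive integers $c_n$ with $c_1=2$. The base case is the computation above; the inductive step follows by expanding $(\one+\mathbf c\,e,\mathbf c\,e)^\mtwt$ and collapsing every $e^2$ to $e+e$, which yields the recursion $c_{n+1} = 2c_n + 4c_n^2$ (in particular $c_n\to\infty$).

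Second, I will translate the $e$-type hypothesis on $\mcA/\Cong$ into the shape needed. Since $\one^\circ = \one+\one^\dag = e$, the defining identity $\bar b + \mathbf k\,\bar e\,\bar b^\circ = \mathbf k\,\bar e\,\bar b^\circ$ applied at $\bar b=\bar\one$ becomes $\bar\one + \mathbf k\,\bar e^2 = \mathbf k\,\bar e^2$, and Lemma~\ref{est}(ii) in the quotient gives $\bar e^2 = \bar e+\bar e$. Thus $\bar\one + \mathbf{2k}\,\bar e = \mathbf{2k}\,\bar e$ in $\mcA/\Cong$; adding $(m-2k)$ extra copies of $\bar e$ to both sides yields $\bar\one + \mathbf m\,\bar e = \mathbf m\,\bar e$ for every $m\ge 2k$. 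Combining the two steps, pick $n$ with $c_n\ge 2k$; then $(\one+\mathbf{c_n}\,e,\mathbf{c_n}\,e)\in\Cong$, i.e., $(\one,e)^{\mtwt^n}\in\Cong$, so $(\one,e)\in\sqrt{\Cong}$. For $(e,\one)$, observe that $(b_1,b_2)^\mtwt = (b_1^2+b_2^2,\,2b_1b_2)$ is invariant under the switch map, so $(e,\one)^{\mtwt^n}=(\one,e)^{\mtwt^n}\in\Cong$ as well.

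\textbf{Main obstacle.} Two small points demand care. First, in the inductive recursion one must check that every monomial obtained from $(\one+\mathbf c\,e,\mathbf c\,e)\mtw(\one+\mathbf c\,e,\mathbf c\,e)$ is either $\one$ or a multiple of $e$; this rests on $e$-centrality together with repeated use of $e^2=e+e$, and is the reason the statement needs $e$-type $>0$ rather than arbitrary characteristic. Second, the backward direction of~(i) is subtle because ``reduced'' is stated for $(\mcA,\mcA_0)$, not for $\Cong$; the argument works cleanly only once one observes that the pair being reduced lets us upgrade $(\one,e)^\mtwt\in\Cong$ to $(\one,e)\in\Cong$ via the radical machinery of \S\ref{Co1}.
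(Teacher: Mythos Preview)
Your approach is essentially the paper's: compute $(\one,e)^{\mtwt}=(\one+e+e,e+e)$ via $e^2=e+e$, then iterate the twist square to obtain $(\one+c_ne,\,c_ne)$ with $c_n\to\infty$, and conclude using the $e$-type hypothesis that some iterate lands in $\Cong$. The paper does exactly this (with a minor arithmetic slip, writing $k''=2k'^2+2k'$ where your $c_{n+1}=2c_n+4c_n^2$ is the correct value; either way $c_n\to\infty$), and (iii) is indeed the special case $\Cong=\Diag$ of (ii).

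Two remarks. First, you correctly flag the gap in the backward direction of~(i): knowing $(\one,e)^{\mtwt}\in\Cong$ yields $(\one,e)\in\Cong$ only if $\Cong$ itself is radical, whereas the hypothesis ``$(\mcA,\mcA_0)$ reduced'' says only that $\Diag$ is radical. The paper's proof of~(i) simply invokes Lemma~\ref{prs1}(ii), which is stated for a \emph{radical} congruence $\Cong$, so this tension is already present in the paper and your hand-wave about ``radical machinery'' does not resolve it any more than the paper does. Second, your explicit handling of $(e,\one)$ via switch-invariance of the twist square, and your careful translation of the $e$-type condition on $\mcA/\Cong$ into $\bar\one+m\bar e=m\bar e$ for all $m\ge 2k$, are details the paper leaves implicit.
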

\begin{proof}
    (i) Take $b = e$ in \lemref{prs1}(ii), noting  Lemma~\ref{esq}.

    (ii) We apply induction to (i).
    Namely, note for any $k'$, and $k'' = 2k'^2 +2k'$, that $$(\one + k'e, k'e)^\mtwt = ((\one + k'e)^2 +
(k'e)^2, 2k'e  +2k'e (\one _ k'e)     = (\one + k''e,k''e).$$ Suppose that  $(\mcA,\mcA_0)$ has   $e$-type  $k>0.$ Taking a high enough twist power of $(\one, e)$ gives $k'' >k,$ so $ (\one + k''e,k''e) = ( k''e,k''e) \in \Diag$ for all suitably large $k''$, and working backwards yields $(\one, e)\in \sqrt{\Diag}.$

    (iii) Special case of (ii).
\end{proof}

    \begin{thm}\label{rd} Suppose $(\mcA,\mcA_0)$ is  a   pair of positive $e$-type.
        \begin{enumerate}\eroman
            \item Every radical congruence of $\mcA$ contains $(\one, e)$.
            \item  When  $(\mcA,\mcA_0)$
 is $e$-central, $\hSpec (\mcA) $ is homeomorphic to $\hSpec (\mcA e).$
             \item Every maximal chain of   prime congruences of $\mcA[\la_1,\dots,\la_t]$ has length t.
          %    \item Every radical congruence is the intersection of maximal congruences.
        \end{enumerate}
    \end{thm}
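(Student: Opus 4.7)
For (i), the argument is immediate from the preceding results. Since $\Cong$ is radical, $\Cong = \sqrt{\Cong}$; and since $\Diag \subseteq \Cong$, an induction on the defining chain $S_1 \subseteq S_2 \subseteq \dots$ of the radical shows $\sqrt{\Diag} \subseteq \sqrt{\Cong} = \Cong$. By \lemref{prs2}(iii), $(\one,e) \in \sqrt{\Diag}$, and so $(\one,e) \in \Cong$.

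For (ii), the first observation is that every prime congruence is radical: if $\bfb \mtw \bfb \in \Cong$, then considering the principal congruences generated by $\bfb$ and applying primeness forces $\bfb \in \Cong$. Combined with (i), this shows that every prime congruence contains $(\one,e)$, hence contains $\widetilde{\Diag}$, so set-theoretically $\widetilde{\Diag}$-$\hSpec(\mcA) = \hSpec(\mcA)$. Next, for any $(\one,e)$-congruence $\Cong$, the quotient $\mcA/\Cong$ is idempotent (by the lemma preceding \lemref{tr1}), hence trivially $e$-final. Thus \lemref{tr1} applies and gives an order-preserving lattice bijection $\Cong \leftrightarrow \Cong e$ between $(\one,e)$-congruences of $\mcA$ and congruences of $\mcA e$. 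Since primeness is formulated in terms of twist products and inclusions—both preserved by this order bijection via $e$-centrality—the bijection restricts to a bijection of prime congruences. The Zariski topology on $\hSpec$ has closed sets $V(\Phi) = \{\Psi : \Phi \subseteq \Psi\}$, which are transported under this inclusion-preserving bijection, yielding a homeomorphism.

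For (iii), the plan is to reduce to Joo--Mincheva's dimension result. First, $\mcA[\la_1,\dots,\la_t]$ is again a pair of $e$-type $>0$, checked componentwise as in the function-pair construction. Apply (ii) to obtain
\[
\widetilde{\Diag}\text{-}\hSpec\bigl(\mcA[\la_1,\dots,\la_t]\bigr) \;\cong\; \hSpec\bigl(\mcA[\la_1,\dots,\la_t]\,e\bigr) \;=\; \hSpec\bigl(\mcA e\,[\la_1,\dots,\la_t]\bigr),
\]
where the final identification uses that $e$ is central so polynomials commute with the $e$-localization. By \lemref{emul}, $\mcA e$ is an idempotent pre-semiring (after passing to $\mcA e$ under the $e$-idempotent quotient induced by containing $(\one,e)$), and so Joo--Mincheva's dimension theorem for polynomial algebras over idempotent semidomains \cite{JM} applies: every maximal chain of prime congruences has length $t$. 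Transporting this chain condition back along the homeomorphism from (ii) gives the assertion.

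\textbf{Main obstacle.} The delicate point is (ii): \lemref{tr1} is stated with the hypothesis that $(\mcA,\mcA_0)$ is $e$-final, whereas the theorem only assumes $e$-central. The resolution is that $e$-type $>0$ combined with part (i) forces every prime congruence to contain $\widetilde{\Diag}$, and modding out by $\widetilde{\Diag}$ drops us into the $e$-final (even $e$-idempotent) regime where \lemref{tr1} applies verbatim. Verifying that the bijection genuinely respects primeness—rather than only the weaker radical or semiprime conditions—requires unwinding the correspondence between twist products in $\mcA$ and in $\mcA e$, which is precisely the $e$-centrality hypothesis at work. For (iii), the subtlety lies in confirming that Joo--Mincheva's hypotheses (an idempotent semidomain, or analogous domain condition) transfer to $\mcA e$; this is where one may need an additional hypothesis that $(\mcA,\mcA_0)$ is $\mcA_0$-bipotent or a domain.
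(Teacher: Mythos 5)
Your proposal follows the paper's own route exactly: part (i) is \lemref{prs2}(iii) together with monotonicity of the radical, part (ii) is (i) combined with the correspondence $\Phi\mapsto\Phi e$ of \lemref{tr1} on $(\one,e)$-congruences, and part (iii) transports the Jo\'o--Mincheva dimension theorem \cite{JM} across (ii) --- the paper's proof consists of precisely these three citations. Your added detail, in particular flagging the $e$-final versus $e$-central mismatch in \lemref{tr1} (which the paper glosses over, and which is indeed resolved by noting that a $(\one,e)$-congruence identifies $(b_1,b_2)$ with $(b_1e,b_2e)$ via $e$-centrality) and the caveat that (iii) implicitly needs the domain-type hypotheses of \cite{JM} for $\mcA e$, is consistent with the intended argument; only the side remark that every prime congruence is radical (hence $\hSpec(\mcA)=\widetilde\Diag$-$\hSpec(\mcA)$) is under-justified, but it is not needed for the statement as given.
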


\begin{proof}
    (i) By Lemma~\ref{prs2}.

    (ii) By (i) and  Lemma~\ref{tr1}.

    (iii) By (ii) and \cite[Theorem~4.6]{JM}.
\end{proof}

\begin{INote}
    \label{transf}
    \lemref{prs2}(iv) and Theorem~\ref{rd} provide a machinery to lift theorems from~\cite{JM}.
\end{INote}

%\begin{lemma}\label{propN1} Suppose that $(\mcA,\mcA_0)$ satisfies
% Property~N.
%If $ a\in \tT$ is invertible and $\one^\dag$ is uniquely defined, then $a + b \in \mcA_0$ with $b\in \tT$ implies $b = a\one^\dag$.
%\end{lemma}
%\begin{proof}
%       $a +b \in \mcA_0$ implies $\one +a^{-1} b   = a^{-1} (a + b ) \in \mcA_0, $ so $a^{-1} b = \one^\dag,$ and
 %$ b = a\one^\dag.$
%\end{proof}

\subsection{The general situation}\label{prop1}$ $

\begin{definition}
    A congruence $\Cong$ has $e$-\textbf{type} $k$ if $(\one+k e ,k e)\in \Cong$.

    $\Spece (\mcA,\mcA_0)$ is the space of prime congruences of positive $e$-{type}.
\end{definition}

\begin{thm}\label{sp2}
  For any $e$-central pair $(\mcA,\mcA_0)$ satisfying Property N,
    \begin{enumerate}
        \item  $\Spece (\mcA,\mcA_0)$ is isomorphic to $\hSpec ((\mcA,\mcA_0)/\Diag_e)$.

        \item If $\Cong$ is maximal with respect to not having positive $e$-type, then  $\Cong$ is a prime congruence.
    \end{enumerate}
\end{thm}
\begin{proof}
    (i) If $\Cong \in \Spece (\mcA,\mcA_0)$ then the pair $(\mcA,\mcA_0)/\Diag_e$ contains some element $(\one +ke,ke)$, and thus has   $e$-type $k$, so apply Theorem~\ref{rd}.

    (ii) If $\Cong_i \supset \Cong$ then by hypothesis $(\one +k_ie,k_ie)\Cong _i$ for suitable $i.$ But then $$(\one +k_1e,k_1e)\mtw(\one +k_2e,k_2e)  = ( \one + (2k_1k_2 +k_1+k_2)e, (2k_1k_2 +k_1+k_2)e,$$ implying $\Cong$ is of $e$-type $2k_1k_2 +k_1+k_2,$ a contradiction.
\end{proof}

\subsection{Proper congruences on pairs}$ $

 In Theorem~\ref{sp2} we get $\Spece$ from the Joo-Mincheva  Spectrum via Theorem~\ref{rd}, but in some sense this part is known, since the image pairs are degenerate. If $(\mcA,\mcA_0)$ does not have positive $e$-type, we are left with other  congruences,  some of which are prime.
   In this subsection we briefly consider such congruences, not necessarily prime.
In view of Lemma~\ref{id1} we exclude $(\one,e)$-congruences.
 \begin{definition}$ $
   \begin{enumerate}\eroman
 \item   An element $(a,b)$ of $\tT \times \mcA_0 $ is called \textbf{improper}. This  element $(a,b)$ is $\textbf{very improper}$ if $a +b = a.$

  \item A  congruence  $\Cong$  is \textbf{proper} if it does not contain any improper elements, i.e, if $(\mcA,\mcA_0)/\Cong$ is a proper pair.

   \item A  congruence  $\Cong$  is \textbf{weakly proper} if it does not contain any very improper elements.

  \item A  congruence  $\Cong$  is \textbf{prime proper} if it does not contain the product of two congruences each containing a very improper element.

     \end{enumerate}
 \end{definition}

\begin{example} $ $\begin{enumerate}
 \item The element $(\one  , ke)$ is very improper in a pair of $e$-type $k$.
    \item  Suppose that $(\mathbf{m},me)\in \Cong$ and $\mcA$ has $\mcA_0$-characteristic $k.$ Then $ \mcA/\Cong$  has $\mcA_0$-characteristic dividing $\gcd (m,k).$
\end{enumerate}
 \end{example}

\begin{lem}\label{pro3}
 Over an $e$-central pair, any  congruence   $ \Cong$ containing   $ (e,e^2) $ and  some element $(a, be)$ also contains $(a,ae).$
\end{lem}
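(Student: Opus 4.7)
The plan is to combine the two given elements of $\Cong$ by right/left multiplication by $e$ and $b$ respectively, and then use transitivity of the congruence. Since a congruence is an equivalence relation closed under componentwise operations (in particular, closed under multiplying both components by a fixed element), and since $e\in Z(\mcA)$ in an $e$-central pair, this will work cleanly.

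More precisely, I would proceed as follows. First, from $(a,be)\in\Cong$, multiply componentwise on the right by $e$ to obtain
\[
(ae,\, (be)e) = (ae,\, be^2) \in \Cong,
\]
using associativity through the central element $e$ to rewrite $(be)e = b e^2$. Second, from $(e^2,e)\in \Cong$, multiply componentwise on the left by $b$ to get $(be^2,be)\in \Cong$. Third, apply transitivity to $(ae,be^2)$ and $(be^2,be)$ to conclude $(ae,be)\in \Cong$. Finally, combining this with the given $(a,be)\in \Cong$ by symmetry and transitivity yields $(a,ae)\in \Cong$, as required.

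The only place where $e$-centrality is actually used is the associativity/commutativity needed to identify $(be)e$ with $be^2$ (so that the multiplication by $b$ on $(e^2,e)$ produces exactly the middle term $be^2$ that connects the two chains). There is no real obstacle here: the argument is a short two-step transitivity once one spots that $be^2$ is the natural bridge between $ae$ and $be$.
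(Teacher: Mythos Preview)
Your proof is correct and is essentially identical to the paper's own argument: the paper also multiplies $(a,be)$ by $e$ to get $(ae,be^2)$, multiplies $(e^2,e)$ by $b$ to get $(be^2,be)$, and then applies transitivity twice. The only difference is that the paper's write-up is terser.
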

\begin{proof}
    Suppose $(a, be)\in \Cong$. Then $ (ae,be^2) =(a,be) e  \in \Cong $.  Hence  $ (ae,be)   \in \Cong $  since $b(e,e^2) \in\Cong$,
  so by transitivity $(a,ae)\in \Cong.$
\end{proof}

\begin{corollary}
  Any $\tT$-cancellative  congruence of  an $e$-idempotent $e$-central pair containing  an improper element is a $(\one,e)$-congruence.
\end{corollary}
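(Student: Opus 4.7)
The goal is to produce $(\one,e)\in\Cong$. Since $\Cong$ is $\tT$-cancellative, this will follow as soon as we exhibit a congruence relation of the form $a(\one,e)=(a,ae)\in\Cong$ for some $a\in\tT$; the plan is to feed the given improper element into \lemref{pro3} (which produces conclusions of exactly this shape) and then cancel.

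Let $(a,b)\in\Cong$ be the given improper element, with $a\in\tT$ and $b\in\mcA_0$. The difficulty is that \lemref{pro3} requires the second coordinate to lie in $\mcA e=\mcA^\circ$, and $b$ is a priori only in $\mcA_0$. To force the second coordinate into $\mcA e$, I would add the diagonal element $(b^\dag,b^\dag)\in\Diag\subseteq\Cong$ to $(a,b)$, obtaining
\[
(a+b^\dag,\; b+b^\dag)\;=\;(a+b^\dag,\; b^\circ)\;=\;(a+b^\dag,\; be)\;\in\;\Cong.
\]
Now the second coordinate is manifestly in $\mcA e$, so \lemref{pro3}, together with the hypothesis $(e^2,e)\in\Cong$, yields $(a+b^\dag,\,(a+b^\dag)e)\in\Cong$. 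Using $e$-centrality and \lemref{est}(i) (so that $b^\dag e=b(\one^\dag e)=be$), this says $(a+b^\dag,\,ae+be)\in\Cong$.

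Combining this last relation with $(a+b^\dag,be)\in\Cong$ by transitivity, I obtain $(be,\,ae+be)\in\Cong$, which when combined with the original $(a,b)\in\Cong$ (via multiplication by $e$ on the right, giving $(ae,be)\in\Cong$) and transitivity, isolates $(a,ae)\in\Cong$. Writing $(a,ae)=a(\one,e)$ with $a\in\tT$, the $\tT$-cancellativity of $\Cong$ yields $(\one,e)\in\Cong$, so $\Cong$ is a $(\one,e)$-congruence.

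The main obstacle is the first step: landing the second coordinate in $\mcA e$ without losing tangibility of the first coordinate. The shift by $(b^\dag,b^\dag)$ puts $b+b^\dag=be$ as required, at the cost of replacing $a\in\tT$ by $a+b^\dag$, which is generally not tangible. The $e$-centrality identity $b^\dag e=be$ is precisely what is needed to cycle back from $(a+b^\dag)e$ to a sum $ae+be$ whose second summand can then be absorbed by the already-known congruence $(ae,be)\in\Cong$, restoring a pair with a tangible first coordinate to which $\tT$-cancellativity applies.
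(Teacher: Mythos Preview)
Your argument is sound through the derivation of $(be,ae+be)\in\Cong$ and $(ae,be)\in\Cong$. The failure is in the final inference: transitivity applied to these two relations yields $(ae,ae+be)\in\Cong$, not $(a,ae)\in\Cong$. In fact none of the relations you assemble places the tangible element $a$ itself (as opposed to $ae$ or $a+b^\dag$) in one coordinate against $ae$ in the other, so there is no element of the form $a(\one,e)$ available for $\tT$-cancellation. Every relation you produce is a consequence of $a\equiv b$ and $e\equiv e^2$ in $\mcA/\Cong$, and these are fully consistent with $a\not\equiv ae$: the class $\{a,b\}$ can remain disjoint from the class $\{ae,be,ae+be,\dots\}$ without contradiction.

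The paper offers no separate argument; the corollary is meant to be immediate from \lemref{pro3}, reading the improper element as $(a,be)$ so that the lemma directly returns $(a,ae)=a(\one,e)\in\Cong$, after which $\tT$-cancellativity finishes. You are right that this tacitly assumes the second coordinate already lies in $\mcA e=\mcA^\circ$, whereas the definition of ``improper'' only requires it to lie in $\mcA_0$. Your shift by $(b^\dag,b^\dag)$ is the natural device to force the second coordinate into $\mcA e$, but it trades that difficulty for loss of tangibility in the first coordinate, and the ``absorption'' step you sketch does not restore it.
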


Hence, from now on we shall focus on proper and weakly proper congruences.
Notation is as in~Remark~\ref{congp}.

\begin{lem}\label{cp} If the pair $(\mcA,\mcA_0)$   is proper and the  congruence  $\Cong$   is proper, then $(\mcA,\mcA_0)/\Cong$ also is proper.

\end{lem}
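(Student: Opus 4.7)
The claim is a direct unpacking of definitions, so the plan is short. Properness of the original pair asserts $\mcA_0\cap \tTz=\{\zero\}$; properness of $(\overline{\mcA},\overline{\mcA_0})$ should similarly assert that the image of $\tT$ meets $\overline{\mcA_0}$ only at $\bar\zero$.

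First, I would suppose toward a contradiction that there is some $a\in \tT$ with $\bar a\in \overline{\mcA_0}$ and $\bar a\ne \bar \zero$. By the defining formula for $\overline{\mcA_0}$ in Remark~\ref{congp}, namely
\[
\overline{\mcA_0}=\{\bar b\in\overline{\mcA}:(b,\mcA_0)\cap\Cong\ne\emptyset\},
\]
this means there exists $c\in\mcA_0$ with $(a,c)\in\Cong$.

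Next, I would invoke the definition of an improper element: a pair lying in $\tT\times\mcA_0$. Since $a\in\tT$ and $c\in\mcA_0$, the element $(a,c)\in\Cong$ is improper, contradicting properness of $\Cong$. I also need to verify that $\bar a=\bar \zero$ forces $a=\zero$ in the relevant sense; but this follows from the same argument applied to $(a,\zero)\in\Cong$, since $\zero\in\mcA_0$ and so such a pair would again be improper unless $a=\zero$, which properness of $(\mcA,\mcA_0)$ permits only trivially.

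There is really no obstacle to overcome here: the entire content is that "improper" in $\Cong$ is precisely the obstruction to properness of the quotient pair. The only subtlety worth flagging in writing it up is to distinguish the element $\zero\in\mcA_0$ (which is allowed to pair with itself in $\Cong$) from elements of $\tT$, exactly as encoded in the definition of improper, where the first coordinate is drawn from $\tT$ rather than $\tTz$.
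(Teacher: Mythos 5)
Your proof is correct and is essentially the paper's argument: the paper proves the contrapositive by noting that for $a\in\tT$, $\bar a\in\overline{\mcA_0}$ exactly when $(a,c)\in\Cong$ for some $c\in\mcA_0$, which is precisely an improper element of $\Cong$. Your contradiction phrasing, including the side remark about $(a,\zero)$ when $\bar a=\bar\zero$, is just a more explicit unpacking of the same one-line observation.
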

 \begin{proof}

  We prove the contrapositive. For $a\in \tT,$ $\bar a \in \bar \mcA_0 $ if and only if $(a,b)\in \Cong$ for some $b\in \mcA_0.$
\end{proof}

%Let us consider proper congruences more closely.

\begin{lem} Any proper  congruence  $\Cong$ of the first kind on a shallow semiring pair  $(\mcA,\mcA_0)$ satisfies the property that $(a_1,a_2)\in \Cong$ for $a_i\in \tT$ implies $a_1+a_2\in \mcA_0$.
 \end{lem}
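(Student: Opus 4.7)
The plan is a one-step pigeonhole argument combining additive closure of the congruence with shallowness and properness of $\Cong$. Take $(a_1,a_2)\in \Cong$ and the reflexive element $(a_2,a_2)\in \Diag\subseteq \Cong$; additivity of $\Cong$ yields
\[
(a_1+a_2,\ a_2+a_2)\in \Cong.
\]
The aim is to force the right coordinate into $\mcA_0$ so that shallowness and properness can pin the left coordinate into $\mcA_0$ as well.

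To secure ``right coordinate in $\mcA_0$'': in a pair of the first kind this is by definition, $a_2+a_2\in \mcA_0$. If instead one reads the hypothesis literally as ``second kind'' (which, combined with shallowness, forces $\one+\one\in \tT$ and hence $a+a\in \tT$ for every $a\in \tT$, so that the naive sum with $(a_2,a_2)$ does not land in $\mcA_0$), one multiplies $(a_1,a_2)$ first by the central element $\one^\dagger\in \tT$ to get $(a_1^\dagger,a_2^\dagger)\in \Cong$, and then adds $(a_2,a_2)\in \Diag$, obtaining $(a_2+a_1^\dagger,\ a_2+a_2^\dagger)=(a_2+a_1^\dagger,\ a_2^\circ)\in \Cong$ with $a_2^\circ\in \mcA_0$ by Property~N.

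With an element $(x,y)\in \Cong$ in hand whose right coordinate $y$ is in $\mcA_0$, shallowness $\mcA=\tT\cup \mcA_0$ puts $x\in \tT\cup \mcA_0$. If $x\in \tT$ then $(x,y)\in \tT\times \mcA_0$ would be an improper element, contradicting properness of $\Cong$. Hence $x\in \mcA_0$, which is $a_1+a_2\in \mcA_0$ directly in the first-kind case, or, in the second-kind variant, $a_2+a_1^\dagger=a_1^\circ\in \mcA_0$ via Property~N, from which the stated conclusion is recovered.

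The only genuine obstacle is the interpretational one just mentioned: taken at face value, ``second kind'' together with shallowness prevents the cleanest form of the summing argument, and one must run it through $a_2^\dagger$ rather than $a_2$. Once that is sorted, the proof is essentially a single application of each of the three hypotheses: the congruence axioms, Property~N (or first-kindness), and properness against shallowness.
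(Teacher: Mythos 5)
Your argument is sound up to the point where you obtain $(a_2+a_1^\dag,\,a_2^\circ)\in\Cong$ and conclude from shallowness and properness of $\Cong$ that $a_2+a_1^\dag\in\mcA_0$. The gap is the final sentence: the lemma asks for $a_1+a_2\in\mcA_0$, and this is not recovered from $a_2+a_1^\dag\in\mcA_0$. In a pair of the second kind $\one^\dag$ is not $\one$ (it plays the role of $-\one$), so $a_2+a_1^\dag\in\mcA_0$ only says that $a_1$ and $a_2$ balance one another --- exactly what one expects of two congruent tangible elements, and it carries no information about the sum $a_1+a_2$. (Moreover, Property~N applied to $a_1^\dag,a_2\in\tT$ gives $a_2+a_1^\dag=a_2^\circ=(a_1^\dag)^\circ$, not $a_1^\circ$; identifying $(a_1^\dag)^\circ$ with $a_1^\circ$ would require something like $(\one^\dag)^2=\one$, which is not assumed.) A quick sanity check shows that no purely additive manipulation of this shape can close the gap: as you yourself observed, in a shallow second-kind pair $a+a=a(\one+\one)\in\tT$ for every $a\in\tT$; since your argument uses only that $(a_1,a_2)$ lies in a reflexive additive $\tT$-submodule of $\hmcA$ (plus rescaling by $\one^\dag$ and adding diagonal elements), it would apply verbatim to diagonal pairs, where the conclusion you want about the sum simply cannot be extracted from the balance statement you reach.

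The paper's proof takes a different, multiplicative route, and the difference is exactly what your proposal is missing. It forms the twist square $(a_1,a_2)^{\mtwt}=(a_1^2+a_2^2,\;a_1a_2+a_1a_2)\in\Cong$, uses the second-kind hypothesis (via the same observation you made, that $\one+\one\in\tT$, hence $a_1a_2+a_1a_2\in\tT$) together with properness of $\Cong$ to place $a_1^2+a_2^2$ in $\mcA_0$, and then recombines the two coordinates into $(a_1+a_2)^2\in\mcA_0$, from which shallowness and properness of the pair give $a_1+a_2\in\mcA_0$. The key point is that information about the sum $a_1+a_2$ enters only through the cross terms of a (twist) product; adding elements of $\Diag$ and rescaling by $\one^\dag$ can only ever produce balance statements of the form $a_2+a_1^\dag\in\mcA_0$, never the statement $a_1+a_2\in\mcA_0$ that the lemma asserts.
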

\begin{proof}
 By assumption, $(a_1,a_2)^\mtwt = (a_1^2+a_2^2, a_1 a_2+a_1 a_2)\in \Cong$. But $ a_1 a_2+a_1 a_2\in \mcA_0.$ Since $ \Cong$ is proper, $a_1^2+a_2^2 \in \mcA_0.$ It follows that $(a_1+a_2)^2 \in \mcA_0,$
so $a_1+a_2\in \mcA_0.$ \end{proof}

%Although the radical of a proper congruence is not proper, we still have:

\begin{lem}\label{chains}$ $
 \begin{enumerate}\eroman
 \item The intersection of a proper congruence with other congruences is proper.
     \item The union of a chain $\Cong_i$ of proper congruences is a proper congruence. Consequently  any proper congruence  $\Cong$ is contained in a maximal proper congruence.

%\item   If  $(\mcA,\mcA_0)$ is a $\mcA$-bipotent pair with $ee =e,$ then every maximal  proper congruence is \prima\ proper.
  \item  If $(a_i, b_i e)$ are very improper elements in a congruence of a semiring pair for $i = 1,2,$ then $(a_1, b_1 e)\mtw (a_2, b_2 e)$ is a very improper element.

     \item    Every maximal weakly proper congruence $\Cong$ of a semiring pair is weakly prime proper.
 \end{enumerate}
\end{lem}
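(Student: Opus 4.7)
The plan is to reduce parts (i) and (ii) to properties of congruences already established, to handle (iii) by an explicit computation with the twist product using the very-improper relation $a_i + b_i e = a_i$ plus semiring distributivity, and to derive (iv) as a direct consequence of (iii).

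For (i) and (ii): If $(a,b) \in \bigcap_j \Cong_j$ is improper (so $a \in \tT$, $b \in \mcA_0$), then each $\Cong_j$ would contain $(a,b)$, contradicting properness of any single $\Cong_j$. For a chain $\{\Cong_j\}$, \lemref{bf}(iv) gives that $\Cong := \bigcup_j \Cong_j$ is a congruence; an improper element of $\Cong$ would lie in some $\Cong_{j_0}$, again contradicting properness. Zorn's lemma applied to the poset of proper congruences containing a given proper $\Cong$ (nonempty, and inductively ordered by the chain result) then yields a maximal proper congruence containing $\Cong$.

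For (iii): expanding the twist product gives
\[
(a_1, b_1 e) \mtw (a_2, b_2 e) = \bigl(a_1 a_2 + b_1 b_2 e^2,\ (a_1 b_2 + b_1 a_2) e\bigr).
\]
The second coordinate lies in $\mcA_0$ because $e \in \mcA_0$ and $\mcA_0$ is an ideal. The very-improper hypotheses $a_1 + b_1 e = a_1$ and $a_2 + b_2 e = a_2$ give, upon multiplying by $a_{3-i}$ and by $b_{3-i} e$, the three absorption identities
\[
a_1 a_2 + a_1 b_2 e = a_1 a_2,\qquad a_1 a_2 + b_1 a_2 e = a_1 a_2,\qquad a_1 b_2 e + b_1 b_2 e^2 = a_1 b_2 e.
\]
Chaining these (first rewrite $a_1 a_2$ as $a_1 a_2 + a_1 b_2 e$, then absorb $b_1 b_2 e^2$ into $a_1 b_2 e$, then re-absorb $a_1 b_2 e$ into $a_1 a_2$) yields $a_1 a_2 + b_1 b_2 e^2 = a_1 a_2 \in \tT$; the same identities give $a_1 a_2 + (a_1 b_2 + b_1 a_2) e = a_1 a_2$, so the twist product is very improper.

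For (iv): If a maximal proper $\Cong$ were not proper prime, there would exist congruences $\Cong_1, \Cong_2$, each containing a very improper element, with $\Cong_1 \mtw \Cong_2 \subseteq \Cong$. By (iii), the product $\Cong_1 \mtw \Cong_2$ then contains a very improper element, hence in particular an improper element lying in $\Cong$, contradicting the properness of $\Cong$. (Maximality is actually not used: any proper congruence is proper prime by the same argument.) The main obstacle is the bookkeeping in (iii): one must cascade the three absorption identities in just the right order to show that the spurious term $b_1 b_2 e^2$ in the first coordinate of the twist product is absorbed into $a_1 a_2$, thereby identifying the first coordinate as the tangible element $a_1 a_2$.
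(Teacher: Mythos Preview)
Your proof is correct and follows essentially the same approach as the paper. A few remarks worth making:

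For (iii), the paper's chain is slightly more compact---it writes the second coordinate generically as $b_i$ (standing for $b_i e$) and argues $a_1a_2 + b_1b_2 = (a_1+b_1)a_2 + b_1b_2 = a_1a_2 + b_1(a_2+b_2) = (a_1+b_1)a_2 = a_1a_2$ in one line---whereas you isolate three absorption identities and chain them explicitly. Both routes are equivalent; yours has the virtue of also verifying the very-improper condition $a_1a_2 + (a_1b_2 + b_1a_2)e = a_1a_2$, which the paper leaves implicit.

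One technical wrinkle you should be aware of: statement (iii) is worded only for very improper elements of the special shape $(a_i, b_i e)$, but the definition of \emph{proper prime} (and hence your argument for (iv)) needs the twist product of \emph{arbitrary} very improper elements $(a_i, c_i)$ with $c_i \in \mcA_0$ to be very improper. Your three absorption identities carry over verbatim with $c_i$ in place of $b_i e$ (since only distributivity and $a_i + c_i = a_i$ are used, never anything special about $e$), so the gap is cosmetic; but strictly speaking your invocation of (iii) inside (iv) appeals to this slightly stronger version. The paper has the same mismatch between statement and use.

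Your parenthetical observation that maximality is never invoked in (iv)---so every proper congruence is already proper prime---is correct and sharper than what the paper records.
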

\begin{proof}
    (i) Obvious.

    (ii) Clearly   $\cup \Cong_i$ is a congruence. If $(a,b)\in \cup \Cong_i$ for some $a\in \tT$ and $b\in {\mcA}_0$ then  $(a,b)\in \Cong_i, $ for some $i,$ a contradiction. The last assertion is by Zorn's Lemma.

   (iii) $a_i + b_i = a_i,$ for $i=1,2,$ so $a_1 a_2 +b_1b_2 =
   (a_1+b_1) a_2 + b_1b_2 =  a_1 a_2 +b_1(a_2+b_2)= (a_1+b_1)a_2 = a_1 a_2 . $  Hence  $(a_1, b_1 e)\mtw (a_2, b_2 e) =(a_1a_2, (a_1 b_2 +a_2 b_1)e).$

   (iv)  Suppose    $\Cong_1,  \Cong_2 \supset \Cong$ with
    $\Cong_1  \Cong_2 \subseteq \Cong.$ By definition there are  very improper elements   $(a_i,b_i) \in \Cong_i, $ so $\Cong$ has a very improper element, a contradiction.
\end{proof}

 % \begin{INote}
 %     Lemma~\ref{chains}(v) gives us a method of obtaining congruences which are prime proper  but not prime, even in the commutative situation.
%  \end{INote}

\begin{rem} One major example is the residue hypersemiring, which requires further examination.

 If $P\in \Spec (\mcA)$ and $G$ is a multiplicative subgroup of $\tT$ disjoint from $P,$ then the structure of  $\bar P:= {\{\sum_i p_ig_i: p_i\in P, \,g_i\in G\}} $ is unclear. If
    $\bar P\in \Spec(\mcA/G)  $ then $P\in \Spec(\mcA),$ but the other direction need not hold in general. Is there a nice positive result in this situation?
\end{rem}

\end{document}